\author{Greg Martin}
\author{Paul Pollack}
\address{University of British Columbia\\ Department of Mathematics \\ Room 121\\ 1984 Mathematics Road\\Vancouver, BC Canada V6T 1Z2}
\email{gerg@math.ubc.ca} 
\email{pollack@math.ubc.ca}
\title[The average least character nonresidue]{The average least character nonresidue and further variations on a theme of Erd\H{o}s}
\DeclareMathAlphabet{\curly}{U}{rsfs}{m}{n}
\newtheorem{thm}{Theorem}[section]
\newtheorem{prop}[thm]{Proposition}
\newtheorem{cor}[thm]{Corollary}
\newtheorem{lem}[thm]{Lemma}
\theoremstyle{definition}
\newtheorem*{rmk}{Remark}
\numberwithin{equation}{section}
\begin{document}
\renewcommand{\labelenumi}{(\roman{enumi})}
\def\e{e}
\def\A{\curly{A}}
\def\B{\curly{B}}
\def\C{\mathbb{C}}
\def\N{\mathbb{N}}
\def\Q{\mathbb{Q}}
\def\Z{\mathbb{Z}}
\def\R{\mathbb{R}}
\def\Pp{\curly{P}}
\def\X{\curly{X}}
\def\Li{\mathrm{Li}}
%\keywords{character nonresidue, zero-density theorem}
\newcommand{\leg}[2]{\genfrac{(}{)}{}{}{#1}{#2}}
\renewcommand{\pmod}[1]{{\ifmmode\text{\rm\ (mod~$#1$)}\else\discretionary{}{}{\hbox{ }}\rm(mod~$#1$)\fi}}

\subjclass[2010]{Primary 11A15, Secondary 11L40, 11R16}
%% changed primary subject classification to 11A15: Power residues, reciprocity, and secondary classifications to 11L40: Estimates on character sums and 11R16: Cubic and quartic extensions. feel free to tweak these!

\begin{abstract}  For each nonprincipal Dirichlet character $\chi$, let $n_{\chi}$ be the least $n$ with $\chi(n)\not\in \{0,1\}$. For each prime $p$, let $\chi_p(\cdot)=\leg{\cdot}{p}$ be the quadratic character modulo $p$. In 1961, Erd\H{o}s showed that $n_{\chi_p}$ possesses a finite mean value as $p$ runs over the odd primes in increasing order. We show that as $q\to\infty$, the average of $n_{\chi}$ over \emph{all} nonprincipal characters $\chi$ modulo $q$ is $\ell(q) +o(1)$, where $\ell(q)$ denotes the least prime not dividing $q$. Moreover, if one averages over all nonprincipal characters of moduli $\leq x$, the average approaches (as $x\to\infty$) the limiting value
\[ \sum_{\ell} \frac{\ell^2}{\prod_{p \leq \ell}(p+1)} \approx 2.5350541804,\]
%% deleted the $\ldots$, since we dont' use them below in similar circumstances
where the sum is over primes $\ell$ and the product is over primes $p \leq \ell$.
	
One can also view Erd\H{o}s's  theorem as giving the average size of the least non-split prime in the quadratic field of conductor $p$, where again $p$ runs over odd primes. Similar results with the average taken over all quadratic fields were recently proved by the second author. In this paper, we prove a result of this type for cubic number fields: If one averages over all cubic fields $K$, ordered by discriminant, then the mean value of the least rational prime that does not split completely in $K$ is
	\[ \sum_{r} \frac{r(5/6+1/r+1/r^2)}{1+1/r+1/r^2} \prod_{p < r}\frac{1/6}{1+1/p+1/p^2} \approx 2.1211027269,\]
%% changed $=$ to $\approx$ and rounded the last decimal place
where the sum is over all primes $r$.
\end{abstract}
\maketitle

\section{Introduction}
For a nonprincipal Dirichlet character $\chi$, let $n_{\chi}$ denote the least character nonresidue for $\chi$, that is, the least positive integer $n$ with $\chi(n) \ne0$ and $\chi(n)\ne1$. Under the assumption of the generalized Riemann hypothesis for Dirichlet $L$-functions, we know that $n_{\chi} \leq 3\log^2q$ for every nonprincipal character $\chi\pmod q$. (As stated this result is due to Bach~\cite[Theorem 3]{bach90}, although the first result of its kind was proved by Ankeny~\cite{ankeny52}. If $q$ is prime, then $\chi$ is a $k$th power residue character for some $k$ dividing $q-1$, and the study of the maximal order of $n_{\chi}$ goes back to Vinogradov and Linnik in the early part of the twentieth century.) The best unconditional result in this direction, due to Norton (see \cite[equation (1.22)]{norton98}), asserts that $n_\chi \ll_{\epsilon} q^{{1}/{4\sqrt{\e}}+\epsilon}$ for every nonprincipal character $\chi\pmod q$.

Short of a satisfactory unconditional ``pointwise'' result, one can study $n_{\chi}$ on average. Erd\H{o}s~\cite{erdos61} was the first to adopt this viewpoint in his treatment of quadratic characters $\chi(\cdot) = \leg{\cdot}{p}$ modulo primes~$p$. He showed that
\begin{equation}\label{eq:erdos0}
\lim_{x\to\infty} \bigg( \pi(x)^{-1} \sum_{2 < p \leq x} n_{\leg{\cdot}{p}} \bigg) = \sum_{k=1}^{\infty}\frac{p_k}{2^k}, \end{equation}
where $p_k$ denotes the $k$th prime in increasing order. In other words, the average value of the least quadratic nonresidue modulo a prime is the constant $\sum_{k=1}^{\infty} {p_k}/{2^k} \approx 3.67464$.
This result was extended to all real primitive characters by the second author \cite{pollack11}, who showed that
\begin{equation}\label{eq:erdos1}
\lim_{x\to\infty} \bigg( {\sum_{|D| \leq x}1} \bigg)^{-1} \bigg( \sum_{|D| \leq x} n_{\leg{D}{\cdot}} \bigg) = \sum_\ell \frac{\ell^2}{2(\ell+1)} \prod_{p<\ell} \frac{p+2}{2(p+1)},
\end{equation}
where the sums on the left-hand side are over fundamental discriminants $D$ whose absolute value is at most $x$, and the sum and product on the right-hand side are over primes $\ell$ and~$p$. In other words, the average least character nonresidue of a general quadratic character is the constant on the right-hand side of equation~\eqref{eq:erdos1}, which is approximately $4.98085$.

Our first goal in this paper is to understand the average of $n_{\chi}$ taken over {\em all} nonprincipal characters $\chi$. Let $\ell(q)$ denote the least prime not dividing $q$. If $\chi$ is any character modulo $q$, then $\chi(n)=0$ whenever $1 < n < \ell(q)$; hence $n_{\chi} \geq \ell(q)$ for all nonprincipal characters~$\chi$. We prove that the average of $n_{\chi}$ over all nonprincipal characters modulo $q$ is always very close to $\ell(q)$:

\begin{thm}\label{thm:main} For $q\geq 3$, we have
\[ \frac{1}{\phi(q)-1}\sum_{\substack{\chi\pmod q \\ \chi \neq \chi_0}} n_{\chi} = \ell(q) + O\bigg( \frac{(\log\log{q})^3}{\log{q}} \bigg). \]
\end{thm}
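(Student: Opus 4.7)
My plan is to use character orthogonality to transform the sum $\sum n_\chi$ into an explicit formula involving sizes of subgroups of $(\Z/q)^*$, and then to exploit the rapid (geometric) growth of those subgroups.

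First, I observe that $n_\chi$ is always a prime: if $n_\chi = ab$ with $1 < a, b < n_\chi$, the minimality of $n_\chi$ would force $\chi(a), \chi(b) \in \{0, 1\}$, giving $\chi(ab) = \chi(a)\chi(b) \in \{0, 1\}$, a contradiction. Next, I enumerate the primes coprime to $q$ in increasing order as $\ell = p_1 < p_2 < \cdots$, and set $H_k := \langle p_1, \ldots, p_k\rangle \leq (\Z/q)^*$ (with $H_0 = \{1\}$). The event $n_\chi > p_k$ is precisely the event that $\chi$ is trivial on $H_k$, so by orthogonality the number of such characters is $\phi(q)/|H_k|$. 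Expanding $n_\chi = \sum_{m \geq 1}\mathbf{1}[n_\chi \geq m]$ and interchanging sums yields
\[
\frac{1}{\phi(q)-1}\sum_{\chi \neq \chi_0} n_\chi = \ell + \frac{1}{\phi(q)-1}\sum_{k \geq 1}(p_{k+1}-p_k)\left(\frac{\phi(q)}{|H_k|} - 1\right),
\]
a sum which terminates as soon as $|H_k| = \phi(q)$.

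To bound the error sum by $O(\phi(q)(\log\log q)^3/\log q)$, I would rely on two lower bounds for $|H_k|$. \textbf{First}, for any prime $p \nmid q$, one has $\operatorname{ord}_q(p) \geq \log q / \log p$, since $p^{\operatorname{ord}_q(p)} \geq q+1$; combined with the estimate $\ell(q) \leq (1+o(1))\log q$ (which follows from $\prod_{p < \ell} p \mid q$ and Chebyshev), this gives $|H_k| \geq \operatorname{ord}_q(\ell) \gg \log q / \log\log q$ for every $k \geq 1$. \textbf{Second}, if $p_{k+1}$ lies outside $H_k$ then $|H_{k+1}| \geq 2|H_k|$, so $|H_k|$ takes at most $\log_2 \phi(q)$ distinct values as it climbs to $\phi(q)$. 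Grouping the sum by ``plateaus'' on which $|H_k|$ is constant and using the doubling to get $D_r \geq 2^{r-1} \log q /\log\log q$ at the $r$-th plateau, the error sum is controlled by $\phi(q)\sum_r L_r/D_r$, where $L_r$ is the $r$-th plateau length. Collapsing the geometric series gives $\sum_r L_r/D_r \leq 2L_{\max}/D_1 \ll L_{\max}\log\log q/\log q$, so an estimate of $L_{\max}$ of size $O((\log\log q)^2)$ yields the claimed bound.

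The main obstacle is exactly the uniform control of the plateau lengths $L_r$. A long plateau corresponds to a long run of primes (coprime to $q$) all lying in a single proper subgroup $H \subsetneq (\Z/q)^*$, and bounding such a run requires a Linnik-type estimate on the least prime in each coset of $H$, or equivalently, a character sum bound for the associated characters of $(\Z/q)^*/H$. I would handle this by splitting into cases according to the index $[(\Z/q)^*:H_k]$: for small index one obtains cheap bounds by counting primes in a union of many cosets, while for large index one invokes Pólya--Vinogradov or Burgess bounds applied to primitive characters of modulus dividing $q$. A careful bookkeeping of these estimates against the geometric decay $1/|H_k|$ should deliver the precise $(\log\log q)^3/\log q$ error unconditionally.
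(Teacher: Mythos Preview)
Your exact formula via the subgroups $H_k=\langle p_1,\dots,p_k\rangle$ and character orthogonality is correct, and in fact it unwinds to precisely the quantity the paper bounds: both approaches reduce the error to $\frac{1}{\phi(q)-1}\sum_{\chi:\,n_\chi>\ell(q)}(n_\chi-\ell(q))$. The divergence, and the gap, is entirely in the analytic input you propose for your acknowledged ``main obstacle''.

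The bound $L_{\max}=O((\log\log q)^2)$ is simply false. The last plateau before $H_k$ fills $(\Z/q\Z)^\times$ has length comparable to $n_\chi$ for some nontrivial $\chi$ trivial on that subgroup, and unconditionally this can be as large as a power of~$q$. That particular plateau is harmless because $D_r$ is huge there; but your estimate $\sum_r L_r/D_r\le 2L_{\max}/D_1$ throws this away, and more seriously the same failure can occur \emph{early}. When $f(q)=|H_1|$ sits near its minimum $\asymp\log q/\log\log q$, the single term $L_1/D_1$ already forces you to show that the least prime outside $\langle\ell(q)\rangle$ is $\ell(q)+O((\log\log q)^2)$. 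Neither P\'olya--Vinogradov nor Burgess comes close: they bound $n_\chi$ by a power of the modulus, not by a power of $\log\log q$. And the ``cheap'' coset-counting argument (at most $|H_k|$ primes below $q$ lie in $H_k$) yields only $L_r\ll(D_r+\omega(q))\log(D_r+\omega(q))$, hence $L_r/D_r\ll\log q$ per plateau, which is far too weak.

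The paper closes this gap with two deeper inputs that your toolkit omits. First, Montgomery's theorem linking zero-free regions to $n_\chi$, combined with Jutila's zero-density estimate, shows that only $O(q^{0.43})$ characters have $n_\chi>c_2\log^5 q$; together with the Burgess--Norton pointwise bound $n_\chi\ll q^{1/5}$ this kills the tail outright. Second, and this is exactly what rescues the small-$D_1$ scenario above, the paper proves a dichotomy using the Baker--Harman theorem on large prime factors of shifted primes $p-1$: either $f(q)\gg\exp((\log\log q)^2)$, so $D_1$ is enormous and your geometric sum is trivially tiny, or there exist at least six primes below $X=(\log\log q)^3/\log\log\log q$ coprime to $q$, whence the subgroup they generate has order $\gg(\log q/\log X)^6$ and the middle range $n_\chi\in(X,\,c_2\log^5 q]$ is controlled. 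Without substitutes for these two ingredients, the bookkeeping you describe cannot reach the error $(\log\log q)^3/\log q$.
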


One consequence of Theorem \ref{thm:main} is an analogue of the average value results \eqref{eq:erdos0} and \eqref{eq:erdos1}, but with the average taken over all nonprincipal characters with conductor at most~$x$:

\begin{cor}\label{cor:finalavg} Define
\begin{equation}\label{eq:Delta def}
\Delta=\sum_{\ell} \frac{\ell^2}{\prod_{p \leq \ell}(p+1)} \approx 2.5350541804,
\end{equation}
where the sum and product are taken over primes $\ell$ and~$p$. Then
\[
\lim_{x\to\infty} \bigg( \sum_{q \leq x}\sum_{\substack{\chi \pmod{q}\\\chi\neq\chi_0}}1 \bigg)^{-1} \bigg( \sum_{q \leq x}\sum_{\substack{\chi \pmod{q}\\\chi\neq\chi_0}} n_{\chi} \bigg) = \Delta.
\]
\end{cor}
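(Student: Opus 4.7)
My plan is to combine Theorem~\ref{thm:main} with an asymptotic density computation for $\ell(q)$ weighted by $\phi(q)$. Multiplying the identity of Theorem~\ref{thm:main} through by $\phi(q)-1$ and summing over $3 \leq q \leq x$ gives
\[
\sum_{q\leq x}\sum_{\substack{\chi\pmod q\\\chi\neq\chi_0}} n_\chi = \sum_{q\leq x}(\phi(q)-1)\ell(q) + O\biggl(\frac{(\log\log x)^3}{\log x}\sum_{q\leq x}\phi(q)\biggr),
\]
where one may first split the sum at $q = x^{1/2}$ to deal with the small-$q$ range in which $\log q$ is small. Since $\sum_{q\leq x}\phi(q) \sim \tfrac{3}{\pi^2}x^2$ (and the same holds with $\phi(q)$ replaced by $\phi(q)-1$), the corollary reduces to showing that
\[
\lim_{x\to\infty}\frac{\sum_{q\leq x}\phi(q)\ell(q)}{\sum_{q\leq x}\phi(q)} = \Delta.
\]

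For a fixed prime $\ell$, one has $\ell(q) = \ell$ precisely when every prime $p<\ell$ divides $q$ while $\ell \nmid q$. These local conditions are handled by inclusion--exclusion together with the standard asymptotic $\sum_{n\leq x,\,\gcd(n,M)=1}\phi(n) \sim \tfrac{3}{\pi^2}x^2\prod_{p\mid M}\tfrac{p}{p+1}$. Equivalently, in the Dirichlet series $\zeta(s-1)/\zeta(s) = \prod_p (1-p^{-s})/(1-p^{1-s})$, one splits each local factor into its $k=0$ and $k \geq 1$ contributions; keeping only the $k \geq 1$ piece at each $p<\ell$ and only the $k=0$ piece at $p=\ell$, the ratios to the unrestricted Euler factor evaluated at $s=2$ come out to $1/(p+1)$ and $\ell/(\ell+1)$ respectively. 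This yields, for each fixed prime $\ell$,
\[
\sum_{\substack{q\leq x\\\ell(q)=\ell}}\phi(q) \sim C_\ell\cdot\frac{3}{\pi^2}x^2, \qquad C_\ell := \frac{\ell}{\prod_{p\leq\ell}(p+1)}.
\]

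The main obstacle is to interchange the sum over $\ell$ with the limit in $x$, since $\ell(q)$ is unbounded. To this end, I would truncate at a parameter $L$ and bound the tail uniformly in $x$ via the crude estimate
\[
\sum_{\ell > L}\ell\sum_{\substack{q\leq x\\\ell(q)=\ell}}\phi(q) \leq \sum_{\ell > L}\ell\sum_{\substack{q\leq x\\\prod_{p<\ell}p\,\mid\, q}} q \;\ll\; x^2\sum_{\ell > L}\frac{\ell}{\prod_{p<\ell}p},
\]
which tends to $0$ as $L\to\infty$ uniformly in $x$, because $\prod_{p<\ell}p = \exp((1+o(1))\ell)$ by the prime number theorem. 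The same bound applied directly to $\sum_{\ell}\ell\,C_\ell$ confirms that this series converges absolutely. A routine diagonal argument---first letting $x\to\infty$ for each fixed $\ell\leq L$ using the per-$\ell$ asymptotic, then letting $L\to\infty$---identifies the limit as $\sum_{\ell}\ell\, C_\ell = \Delta$, completing the proof.
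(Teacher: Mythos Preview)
Your argument is correct. The reduction to evaluating $\sum_{q\le x}\phi(q)\ell(q)$ via Theorem~\ref{thm:main} matches the paper exactly, including the split at $q=x^{1/2}$ to control the small-$q$ contribution to the error.

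Where you diverge is in the evaluation of $\sum_{q\le x}\phi(q)\ell(q)$. The paper (Proposition~\ref{prop:main sum}) sets $y=2\log x$, sorts each $q$ by its largest $y$-friable divisor $m$, applies the uniform estimate of Lemma~\ref{lem:sumphi} to the cofactor, and then evaluates the resulting sum $\sum_m \phi(m)\ell(m)/m^2$ over friable $m$ (Lemma~\ref{lem:friable sum}); this machinery yields the quantitative asymptotic $\tfrac{3\Delta}{\pi^2}x^2 + O(x^2/\log x)$. Your approach is more direct: fix the value of $\ell(q)=\ell$, read off the density $C_\ell$ from the Euler product at $s=2$, and then handle the tail $\ell>L$ by the crude bound $\sum_{q\le x,\,P_\ell\mid q} q \ll x^2/P_\ell$ with $P_\ell=\prod_{p<\ell}p$. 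This is simpler and entirely adequate for the qualitative statement of the corollary, though it does not deliver the error term $O\big((\log\log x)^2/\log x\big)$ that the paper's method provides. The paper's friable-part decomposition is also set up so that it generalizes cleanly to the primitive-character variant (Theorem~\ref{finalavg primitive}), where $\phi$ is replaced by $\phi^*$.
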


We remark that our methods can also address the analogues of Theorem~\ref{thm:main} and Corollary~\ref{cor:finalavg} in which the sums over characters $\chi$ are restricted to primitive characters only (see equation~\eqref{ell avg for primitive} and Theorem~\ref{finalavg primitive}).

%\begin{rmk} A quick calculation with \textsc{Mathematica} shows that
%\[ \Delta= 	2.53505418036043883016553000718590835086117801385370\ldots\]
%\end{rmk}

One can also view \eqref{eq:erdos0} and \eqref{eq:erdos1} as results in algebraic number theory. If $\chi$ is the quadratic character modulo $p$, then $n_{\chi}$ is the least prime that does not split in the quadratic field of conductor $p$ (equivalently, of discriminant $(-1)^{(p-1)/2}p$). Thus, Erd\H{o}s's theorem gives the average least non-split prime in a quadratic extension, where the average is restricted to quadratic fields of prime conductor. Similarly,  \eqref{eq:erdos1} is an estimate for the average least inert prime, where now the average is taken over all quadratic fields, ordered by discriminant.

For a prime $p\equiv 1\pmod{3}$, define $n_3(p)$ as the least cubic nonresidue modulo $p$. Elliott \cite{elliott68} showed that $n_3(p)$ possesses a finite mean-value. (In fact, he showed the analogous result for $k$th power nonresidues, for all $k$.) We can interpret Elliott's result on $n_3(p)$ as the determination of the average least non-split prime, with the average restricted to cyclic cubic extensions of prime conductor. Our second theorem gives the average least non-split prime, with the average taken over \emph{all} cubic extensions of $\Q$, ordered by discriminant. In what follows, we write $D_K$ for the discriminant of the number field $K$.

\begin{thm}\label{thm:cubic} For a cubic number field $K$, let $n_K$ denote the least rational prime that does not split completely in $K$. Define
\begin{equation} \label{eq:Delta_nsc def}
\Delta_{nsc} = \sum_\ell \frac{5\ell^3+6\ell^2+6\ell}{6(\ell^2+\ell+1)} \prod_{p < \ell}\frac{p^2}{6(p^2+p+1)}\approx 2.1211027269,
%% again, rounded the last decimal place
\end{equation}
where the sum and product are taken over primes $\ell$ and~$p$. Then
\[
\lim_{x\to\infty} \bigg( \sum_{|D_K| \leq x} 1 \bigg)^{-1} \bigg( \sum_{|D_K| \leq x} n_K \bigg) = \Delta_{nsc},
\]
where the sums on the left-hand side are taken over (all isomorphism classes of) cubic fields $K$ for which $|D_K| \leq x$.
\end{thm}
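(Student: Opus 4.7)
The plan is to express the mean value as a series $\sum_\ell \ell\cdot P(n_K = \ell)$, where $P(n_K = \ell)$ denotes the limiting proportion (as $|D_K|\to\infty$) of cubic fields $K$ with $n_K = \ell$. The event $n_K=\ell$ is a finite conjunction of local conditions: every rational prime $p < \ell$ must split completely in $K$, and $\ell$ must not split completely in $K$. Thus everything reduces to (i) computing the limiting density of cubic fields satisfying a prescribed ``splits completely'' condition at a finite set of primes, and (ii) justifying the interchange of sum and limit in order to evaluate the full mean.

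For step (i), I would invoke the cubic-field density theorems of Davenport--Heilbronn, as sharpened and extended to local conditions by Belabas, Bhargava--Shankar--Tsimerman, and Taniguchi--Thorne: for any finite set $S$ of rational primes and any prescription of \'etale local types at the primes in $S$, the natural density of cubic fields realizing that prescription (ordered by $|D_K|$) factors as a product of local densities. In particular, the density of cubic fields in which a given prime $p$ splits completely is
\[ \delta_p = \frac{1/6}{1 + 1/p + 1/p^2}, \]
the numerator being the $S_3$-mass $1/|S_3|$ of the completely split \'etale cubic algebra over $\Q_p$, and the denominator being the total discriminant-weighted mass of \'etale cubic algebras over $\Q_p$. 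Applied to the conditions defining $n_K = \ell$, this yields
\[ P(n_K=\ell) = (1-\delta_\ell)\prod_{p < \ell}\delta_p = \frac{5/6 + 1/\ell + 1/\ell^2}{1 + 1/\ell + 1/\ell^2}\prod_{p < \ell}\frac{1/6}{1+1/p+1/p^2}, \]
and $\sum_\ell \ell\cdot P(n_K=\ell)$ is precisely $\Delta_{nsc}$ as in \eqref{eq:Delta_nsc def}.

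The principal obstacle is step (ii): justifying the exchange of the limit $x\to\infty$ with the sum over $\ell$. What is required is a uniform upper bound of the form
\[ \#\{K : |D_K|\leq x,\ \text{every prime }p\leq y\text{ splits completely in }K\} \ll x\prod_{p\leq y}\delta_p \]
valid for $y$ growing (slowly) with $x$. Since $\prod_{p\leq y}\delta_p \ll 6^{-\pi(y)}\prod_{p\leq y}(1-1/p)^{-1}$, the tail contribution to the mean from fields with $n_K > y$ is then bounded by $\sum_{\ell > y}\ell\prod_{p\leq \ell}\delta_p$, which decays super-exponentially in $y$. This uniform upper bound itself I would establish either via a sieve argument on the Delone--Faddeev parametrization of cubic rings, or by combining the explicit error terms in cubic-field counts with local conditions (as in Taniguchi--Thorne) with a truncation in $y$. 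A standard two-parameter limit, with $y=y(x)\to\infty$ slowly as $x\to\infty$, then legitimizes the termwise passage to the limit and completes the proof of Theorem~\ref{thm:cubic}.
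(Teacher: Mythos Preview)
Your step~(i) is exactly what the paper does, via Taniguchi--Thorne (Proposition~\ref{prop:TT}). The gap is in step~(ii): the uniform count bound you propose does not, by itself, control the tail of the mean.

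Concretely, the Taniguchi--Thorne estimate carries an error term $O(x^{5/6})$, so
\[
\#\{K:|D_K|\le x,\ n_K>\ell\}\ll x\prod_{p<\ell}\delta_p
\]
is only valid while the main term $x\prod_{p<\ell}\delta_p\asymp x\cdot 6^{-\pi(\ell)}$ dominates $x^{5/6}$, i.e.\ for $\ell\ll\log x$; indeed Proposition~\ref{prop:TT} is stated only for $y\le\frac1{40}\log x$. Your claimed tail bound $\sum_{\ell>y}\ell\prod_{p\le\ell}\delta_p$ implicitly uses the count inequality for \emph{every} $\ell>y$, which is unavailable. And once the count estimate degenerates to $O(x^{5/6})$, you have no control on the individual summands $n_K$: a priori $n_K$ could be as large as a power of $|D_K|$, so $x^{5/6}$ fields contributing $n_K\asymp x^{\theta}$ would swamp the main term. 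A sieve on the Delone--Faddeev lattice faces the same issue: it yields upper bounds on counts, not on $\sum n_K$.

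The paper closes this gap with two further ingredients that your proposal does not mention. First, a \emph{pointwise} bound $n_K\ll|D_K|^{1/7.39}$ (Proposition~\ref{prop:li}, due to Li), playing the role that Norton's bound $n_\chi\ll q^{1/4\sqrt e+\epsilon}$ played for Theorem~\ref{thm:main}. Second, a bound showing that very few cubic fields have $n_K>(\log x)^{200}$: if every prime up to $(\log x)^{200}$ splits completely in $K$, then every such prime splits in the quadratic resolvent $\Q(\sqrt{D_K})$, and a character large-sieve result (Proposition~\ref{prop:charlem}) shows there are $\ll x^{1/99}$ such quadratic fields; each is the resolvent of $\ll x^{0.84}$ cubics by Ellenberg--Venkatesh (Lemma~\ref{lem:EV}). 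The product $x^{1/7.39}\cdot x^{1/99+0.84}\ll x^{0.99}$ then handles the range $n_K>(\log x)^{200}$, while Taniguchi--Thorne handles $n_K\le\frac1{40}\log x$ and the intermediate range. Without some substitute for Li's bound and the resolvent argument, your outline does not go through.
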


\begin{rmk} As a reality check, we sampled cubic fields with discriminant near $-10^{12}$. Using K.\ Belabas's \texttt{cubics} package (see \cite{belabas97, belabas04}) and \texttt{PARI/GP}, we found that the average of $n_K$ over cubic fields $K$ with $|D_K+10^{12}| \leq 250$,$000$ is about $2.12065$, a close match to the limiting constant $\Delta_{nsc}$.
\end{rmk}

The subscript of $\Delta_{nsc}$, which stands for ``not split completely'', suggests that one can also calculate the average value of the least prime with other behaviors (split completely, or partially split, or inert) in cubic extensions of $\Q$. We can indeed establish these additional average values (see Theorem~\ref{thm:generalsplit}), but only under the assumption of a generalized Riemann hypothesis.

The proofs of our theorems, while similar in flavor to the arguments of~\cite{erdos61, pollack11}, employ different tools. For the proof of Theorem~\ref{thm:main}, our primary inspiration was a paper of Burthe~\cite{burthe97}, which uses zero-density estimates and a theorem of Montgomery (Proposition~\ref{prop:montgomery} below) to prove that
\[ \frac{1}{x}\sum_{q \leq x} \max_{\substack{\chi \pmod{q} \\ \chi \neq \chi_0}} n_{\chi} \ll (\log{x})^{97}; \]
note that Burthe's result shows unconditionally that a bound of the same flavor as Bach's $n_\chi \le 3\log^2q$ holds on average. The proof of Theorem~\ref{thm:cubic} involves a few different ingredients; perhaps most crucial is the recent work of Taniguchi and Thorne~\cite{TT11} on counting cubic fields with prescribed local conditions (see Proposition~\ref{prop:TT}).

\subsection*{Notation} The letters $\ell$, $p$, and $r$ are reserved for prime variables. We write $P(n)$ for the largest prime factor of $n$ and $\ell(n)$ for the smallest prime {\em not} dividing~$n$. We say that $n$ is \emph{$y$-friable} (or \emph{$y$-smooth}) if $P(n)\leq y$, and we let $\Psi(x,y)$ denote the number of $y$-friable integers not exceeding~$x$. We write $\omega(n)=\sum_{p \mid n}1$ for the number of distinct prime factors of $n$ and $\Omega(n)=\sum_{p^k \mid n}1$ for the number of prime factors of $n$ counted with multiplicity. We use $c_1, c_2, \dots$ for absolute positive constants.
%We write $\log_1{x} = \max\{1,\log{x}\}$, and we use $\log_k$ for the $k$th iterate of $\log_1$.

\section{The average least character nonresidue\pmod q} \label{main thm section}

We begin by quoting two theorems from the literature. The first, due to Montgomery~\cite[Theorem 1, p.\ 164]{montgomery94} (compare with~\cite{LMO79}), relates the size of $n_{\chi}$ to a zero-free region for $L(s,\chi)$ near $s=1$. Define $N(\sigma, T, \chi)$ as the number of zeros $s=\beta+i\gamma$ of $L(s,\chi)$ with $\sigma \leq \beta \leq 1$ and $|\gamma| \leq T$.

\begin{prop}\label{prop:montgomery} Let $\chi$ be a nonprincipal Dirichlet character modulo $q$, and let $\delta$ be a real number in the range $1/\log q  < \delta \leq \frac12$. If $N(1-\delta, \delta^2 \log{q}, \chi)=0$, then $n_{\chi} < (c_1 \delta \log{q})^{1/\delta}$. Here $c_1>0$ is an absolute constant.
\end{prop}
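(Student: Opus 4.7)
The plan is to argue by contradiction: assume $n_\chi \geq y$, where $y := (c_1 \delta \log q)^{1/\delta}$ for a small absolute constant $c_1 > 0$ to be chosen. Under this assumption, $\chi(n) \in \{0, 1\}$ for every $n \leq y$, with $\chi(n) = 1$ exactly when $(n, q) = 1$. The twisted Chebyshev function therefore satisfies
\[
\psi(y, \chi) := \sum_{n \leq y} \chi(n)\Lambda(n) = \psi(y) - \sum_{\substack{p^k \leq y \\ p \mid q}}\Lambda(p^k) = y + O\!\left(\frac{y}{\log y}\right),
\]
by the prime number theorem, the subtracted term being at most $\log q \cdot \log y$. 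Replacing $\chi$ by the primitive character $\chi^\ast$ inducing it alters $\psi(y, \chi)$ by $O(\log q \cdot \log y)$, which is negligible for the values of $y$ at issue.

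To contradict this lower bound, I would feed $\psi(y, \chi^\ast)$ into a smoothed explicit formula. Fixing a smooth, compactly supported weight $w$ with $\int w \asymp 1$, one has
\[
\sum_n \chi^\ast(n) \Lambda(n)\, w(n/y) = -\sum_{\rho}\widehat{w}(\rho)\, y^\rho + O_A(y^{-A}),
\]
the sum running over nontrivial zeros $\rho = \beta + i\gamma$ of $L(s, \chi^\ast)$ and $\widehat{w}$ denoting the Mellin transform of $w$, which decays faster than any polynomial in $|\gamma|$. Hence the zero sum is effectively confined to $|\gamma| \lesssim T := \delta^2 \log q$, and the hypothesis $N(1-\delta, T, \chi) = 0$ transfers to $\chi^\ast$ (since $L(s, \chi)$ and $L(s, \chi^\ast)$ differ only by finitely many Euler factors, whose zeros lie on the imaginary axis). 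Consequently every relevant $\rho$ has $\beta < 1-\delta$, giving
\[
\Big|\sum_\rho \widehat{w}(\rho)\, y^\rho\Big| \ll y^{1-\delta} \cdot S, \qquad S := \sum_{|\gamma| \leq T}|\widehat{w}(\rho)|.
\]

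The smoothed sum on the left is itself $\asymp y$ by the reasoning of the first paragraph, so $y \ll y^{1-\delta} S$, i.e., $y^\delta \ll S$. The proposition then reduces to the bound $S = O(\delta \log q)$, and this is the main obstacle: the crude estimate $S \leq N(T, \chi^\ast)\cdot \max |\widehat{w}| \ll T \log(qT) \asymp \delta^2\log^2 q$ is too large by a factor of $\delta^{-1}\log q$, while using the standard refinement $\sum_{|\gamma|\leq T}(|\gamma|+1)^{-1} \ll \log^2(qT)$ still loses a factor of $\log q/\delta$. To gain the remaining factor I would invoke a Tur\'an power-sum argument, in the spirit of Linnik, exploiting the absence of high-lying zeros to produce cancellation in sums of the form $\sum_\rho y^\rho$. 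Once $S \ll \delta\log q$ is in hand, the inequality $y^\delta \ll \delta\log q$ rearranges to $y < (c_1\delta \log q)^{1/\delta}$, as required.
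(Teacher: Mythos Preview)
The paper does not prove this proposition at all; it is quoted from the literature as a result of Montgomery \cite[Theorem~1, p.~164]{montgomery94}, so there is no argument in the paper to compare your outline against.

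As for the outline itself, the overall strategy (assume $n_\chi\ge y$, deduce $\psi(y,\chi)\sim y$, and contradict this via an explicit formula) is the right one, but two steps are not justified. First, the claim that a \emph{fixed} smooth weight $w$ ``effectively confines'' the zero sum to $|\gamma|\lesssim T=\delta^2\log q$ is incorrect: the Mellin transform of such a $w$ decays only once $|\gamma|\gg 1$, not once $|\gamma|\gg T$. In the range $1/\log q<\delta<(\log q)^{-1/2}$ one has $T<1$, and zeros with $T<|\gamma|\le 1$---about which the hypothesis says nothing---may have real part arbitrarily close to $1$ and each contribute a term of size $\asymp y$. The remedy is to choose the kernel adapted to the scale $T$ (equivalently, take $W(u)=w(e^u)$ smooth on scale $1/T$ over a logarithmic interval of length $\asymp\log y$; this is feasible precisely because $T\log y\gtrsim\delta\log q>1$), and it is through this choice of kernel that Montgomery's argument actually gains the needed factor.

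Second, the appeal to a Tur\'an power-sum argument is misplaced. Tur\'an's inequalities assert that $\max_{K\le k\le K'}\bigl|\sum_j z_j^{k}\bigr|$ is \emph{large}; they are used to detect zeros or to force sign changes (as in the Deuring--Heilbronn phenomenon), not to show that a zero sum is small. They do not furnish the upper bound $S\ll\delta\log q$ you are after, and ``producing cancellation in $\sum_\rho y^\rho$'' is the opposite of what the method does. So while your diagnosis of the obstacle is accurate, the proposed fix would not close the gap.
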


The second, due to Jutila~\cite[Theorem 1]{jutila77}), is a zero-density estimate for the collection of characters to a given modulus.

\begin{prop}\label{prop:jutila} Let $\epsilon > 0$. For $4/5 \leq \alpha \leq 1$ and $T \geq 1$, we have
\[ \sum_{\chi \pmod{q}} N(\alpha, T, \chi) \ll_{\epsilon} (qT)^{(2+\epsilon)(1-\alpha)}. \] \end{prop}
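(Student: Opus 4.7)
The plan is to follow the classical zero-detection paradigm of Montgomery, sharpened by Jutila using a fourth-power moment of Dirichlet $L$-functions. First I would pass to a maximal $(1/\log qT)$-separated subset of the zeros so that overcounting is harmless; call the resulting count $R$. Up to a factor of $\log qT$ it suffices to bound $\sum_{\chi}R(\alpha,T,\chi)$, where $R$ counts well-spaced zeros of $L(s,\chi)$ in the box $\sigma\geq\alpha$, $|t|\leq T$.

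Next I would detect each such zero by means of a mollifier. Setting
\[
M_X(s,\chi)=\sum_{m\leq X}\frac{\mu(m)\chi(m)}{m^s},
\]
one has $L(s,\chi)M_X(s,\chi)=1+\sum_{n>X}b_\chi(n)n^{-s}$ where $b_\chi(n)=\chi(n)\sum_{d\mid n,\,d\leq X}\mu(d)$. Convolving with a smooth weight $e^{-n/Y}$ and shifting contours past a zero $\rho=\beta+i\gamma$, a standard argument shows that if $L(\rho,\chi)=0$ with $\beta\geq\alpha$, then either the tail Dirichlet polynomial
\[
A_\chi(\rho)=\sum_{X<n\leq Y^{1+\epsilon}}\frac{b_\chi(n)}{n^\rho}e^{-n/Y}
\]
satisfies $|A_\chi(\rho)|\gg 1$, or an analogous bound holds for the short sum $\sum_{n\leq X}\chi(n)n^{-\rho}$ after appeal to the approximate functional equation (which introduces the factor $q$). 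Thus the zero count is dominated by the number of character-and-ordinate pairs $(\chi,t_j)$ at which a Dirichlet polynomial of length $N\leq (qT)^{1+\epsilon}$ is $\gg 1$ at $\sigma=\alpha$.

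The third step is a hybrid large-values estimate. Using the Halász--Montgomery inequality to bound $\sum_j|A(\alpha+it_j,\chi)|^2$ in terms of $N^{2(1-\alpha)}$ times mean values of $|A|^2$ and $|A|^4$, and then invoking the large-sieve inequality together with Heath-Brown's fourth-power moment
\[
\sum_{\chi\pmod q}\int_{-T}^{T}|L(\tfrac12+it,\chi)|^4\,dt\ll (qT)^{1+\epsilon}
\]
to control the mean values after summing over $\chi$, one obtains a bound of the shape $(qT)^{(2+\epsilon)(1-\alpha)}$. The threshold $\alpha\geq 4/5$ is forced by the point at which the fourth-moment input just balances against the $N^{2(1-\alpha)}$ loss from the large-values device, which is why the estimate is stated only in that range.

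The main obstacle, as always with zero-density theorems of this strength, is the bookkeeping in the hybrid large-values step: one must choose the mollifier length $X$ and the smoothing parameter $Y$ so that (a) the contribution of $n\leq X$ is subsumed into a short Dirichlet polynomial handled by the fourth moment, (b) the tail $n>Y^{1+\epsilon}$ is negligible, and (c) the intermediate range yields exactly the exponent $2(1-\alpha)$. Once this balance is achieved, the factor $2+\epsilon$ (rather than $2$) absorbs all logarithmic losses from the well-spacing reduction, Heath-Brown's estimate, and the Perron-style contour shift.
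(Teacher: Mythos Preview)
The paper does not supply a proof of this proposition at all: it is quoted directly from the literature as \cite[Theorem~1]{jutila77}, with no argument given. Consequently there is nothing in the paper to compare your proposal against.

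That said, your sketch is a reasonable high-level outline of the standard zero-detection method that underlies density estimates of this strength (mollified zero-detecting polynomial, reduction to large values of Dirichlet polynomials, Hal\'asz--Montgomery inequality plus a mean-value input). Two small remarks. First, the attribution to Heath-Brown for the hybrid fourth moment is anachronistic: Jutila's 1977 paper predates that work, and his argument uses a somewhat different large-values treatment. Second, your explanation of why the threshold $\alpha\geq 4/5$ appears is a bit glib; the $4/5$ emerges from optimizing the mollifier length and the subdivision/reflection of the detecting polynomial against the Hal\'asz--Montgomery bound, not merely from a fourth-moment balance. These are issues of detail and attribution rather than a genuine gap in strategy, but since the paper itself simply cites Jutila, the appropriate ``proof'' here is just the citation.
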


These propositions allow us to establish the next lemma, which will eventually be used to show that characters $\chi$ with $n_{\chi}$ larger than about $(\log{q})^5$ do not significantly affect the average of $n_{\chi}$.

\begin{lem}\label{lem:nottoomany} There exists an absolute constant $c_2>0$ such that for any positive integer $q$, the number of characters $\chi\pmod q$ with $n_{\chi} \ge c_2 \log^5q$ is $\ll q^{0.43}$.
\end{lem}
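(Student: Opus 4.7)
The plan is to combine the two propositions directly: Montgomery's result (Proposition~\ref{prop:montgomery}) converts the assumption that $n_\chi$ is large into the existence of a zero of $L(s,\chi)$ near $s=1$, and Jutila's density estimate (Proposition~\ref{prop:jutila}) then bounds the number of characters for which such a zero can exist.

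More precisely, the exponent $5$ in $\log^5 q$ strongly suggests taking $\delta = 1/5$ in Proposition~\ref{prop:montgomery}: with that choice, $(c_1\delta\log q)^{1/\delta} = (c_1 \log q / 5)^5$, which is $c_2 \log^5 q$ for $c_2 := (c_1/5)^5$. Thus by the contrapositive of Proposition~\ref{prop:montgomery}, any character $\chi$ with $n_\chi \ge c_2 \log^5 q$ must satisfy $N(4/5,\,\tfrac{1}{25}\log q,\,\chi) \geq 1$, i.e.\ $L(s,\chi)$ has a zero in the rectangle $4/5 \leq \beta \leq 1$, $|\gamma| \leq \tfrac{1}{25}\log q$.

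The number of such characters is therefore at most $\sum_{\chi\pmod q} N(4/5,\,\tfrac{1}{25}\log q,\,\chi)$. Applying Proposition~\ref{prop:jutila} with $\alpha = 4/5$ and $T = \tfrac{1}{25}\log q$ gives
\[
\sum_{\chi\pmod q} N(4/5,\,\tfrac{1}{25}\log q,\,\chi) \ll_{\epsilon} (qT)^{(2+\epsilon)/5} \ll_{\epsilon} q^{2/5+\epsilon'}
\]
for any $\epsilon' > 0$, since the factor $T^{(2+\epsilon)/5}$ is only a power of $\log q$ and can be absorbed. Choosing $\epsilon'$ small enough (any value below $0.03$ suffices) yields the stated bound $q^{0.43}$.

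I do not foresee any substantive obstacle: the lemma is essentially a direct quotation of the two cited propositions with the matching choice $\delta = 1/5$, and the only minor bookkeeping items are verifying that $\delta = 1/5$ satisfies the hypothesis $1/\log q < \delta \leq 1/2$ of Proposition~\ref{prop:montgomery} (automatic for $q \geq 3$) and that $T = \tfrac{1}{25}\log q \geq 1$ (trivially arranged by absorbing small $q$ into the implicit constant). Once these are noted, the inequality $2/5 + \epsilon' < 0.43$ closes the proof.
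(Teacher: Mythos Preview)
Your proof is correct and follows essentially the same approach as the paper: take $\delta = 1/5$ in Proposition~\ref{prop:montgomery}, then apply Proposition~\ref{prop:jutila} with $\alpha = 4/5$ (the paper fixes $\epsilon = 1/10$ explicitly to obtain $(q\log q)^{21/50} \ll q^{0.43}$). One small slip: the hypothesis $1/\log q < 1/5$ actually requires $q > e^5$, not $q \ge 3$, but as you already observe for the condition $T \ge 1$, small $q$ are absorbed into the implicit constant (the paper simply assumes $q \ge e^{25}$).
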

%% changed q^{9/20} to q^{0.43} , since we use this below

\begin{proof}
We may assume that $q\ge e^{25}$, and we set $c_2 = (\frac{c_1}5)^5$ where $c_1$ is the constant from Proposition~\ref{prop:montgomery}. That proposition with $\delta = \frac15$ implies that the number of nonprincipal $\chi$ with $n_{\chi} \geq (\frac{c_1}{5}\log{q})^5 = c_2\log^5q$ is bounded above by
\[
\#\bigg\{ \chi \pmod q \colon N\bigg(\frac{4}{5}, \frac{\log q}{25}, \chi\bigg) \ge 1 \bigg\} \le \sum_{\chi \pmod{q}} N\bigg(\frac{4}{5}, \frac{\log q}{25}, \chi\bigg).
\]
From Proposition~\ref{prop:jutila} with $\alpha=\frac45$ and $\epsilon = \frac{1}{10}$, this sum is $\ll (q\log{q})^{21/50} \ll q^{0.43}$.
\end{proof}

We also need an elementary lemma concerning the structure of the character group.

\begin{lem}\label{lem:vanishing characters}
Let $G$ be a finite abelian group and $H\le G$ a subgroup. The number of homomorphisms $\chi \colon G\to\C^{\times}$ such that $\chi(h)=1$ for all
%% changed $\C$ to $\C^{\times}$.
 $h\in H$ is exactly $\#G/\#H$. In particular, if $H$ is a subgroup of the multiplicative group $(\Z/q\Z)^\times$, then the number of Dirichlet characters $\chi\pmod q$ such that $\chi(h)=1$ for all $h\in H$ is exactly $\phi(q)/\#H$.
\end{lem}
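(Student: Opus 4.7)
The plan is to reduce the count to the well-known identity $|\widehat{A}| = |A|$ for a finite abelian group $A$, where $\widehat{A} := \mathrm{Hom}(A, \C^\times)$. First I would observe that a homomorphism $\chi\colon G \to \C^\times$ is trivial on $H$ if and only if $H \subseteq \ker(\chi)$, which happens if and only if $\chi$ factors through the quotient map $\pi\colon G \to G/H$. Thus the characters of $G$ vanishing on $H$ are in natural bijection with $\widehat{G/H}$ via $\psi \mapsto \psi \circ \pi$; the inverse map is supplied by the universal property of the quotient.

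Next I would invoke $|\widehat{G/H}| = |G/H| = \#G/\#H$. This standard duality is most easily proved by reducing via the structure theorem for finite abelian groups to the cyclic case, where the characters of $\Z/n\Z$ are explicitly parametrized by $n$th roots of unity. This yields the first assertion, and the second follows by specialization: the Dirichlet characters modulo $q$ are precisely the homomorphisms $(\Z/q\Z)^\times \to \C^\times$, and $(\Z/q\Z)^\times$ has order $\phi(q)$, so the count becomes $\phi(q)/\#H$.

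I do not anticipate any substantial obstacle here, as the statement is essentially a textbook fact; the only care required is verifying that the bijection $\chi \leftrightarrow \chi \circ \pi$ is well-defined and injective, both of which are immediate. One could alternatively avoid any appeal to the structure theorem by arguing directly: letting $n = \#G/\#H$, choose a sequence of subgroups $H = H_0 \le H_1 \le \cdots \le H_m = G$ with each $H_{i+1}/H_i$ cyclic, and extend characters one step at a time, noting that a character of $H_i$ has exactly $[H_{i+1}:H_i]$ extensions to $H_{i+1}$.
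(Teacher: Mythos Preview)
Your argument is correct and matches the paper's own proof essentially verbatim: the paper observes that the characters trivial on $H$ are in canonical bijection with $\mathrm{Hom}(G/H,\C^\times)$ and that this latter group has order $\#G/\#H$. Your added remarks on proving $|\widehat{A}|=|A|$ and the alternative step-by-step extension are more detail than the paper provides, but the core idea is identical.
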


The proof of Lemma \ref{lem:vanishing characters} is simple; the homomorphisms $\chi\colon G\to\C^{\times}$ that are trivial on $H$ are in canonical bijection with  the homomorphisms $\tilde{\chi}\colon G/H \to \C^{\times}$, the number of which is $\#G/\#H$ (compare with \cite[Chapter VI, \S1.1]{serre73}).

%% Part of me rebels at using $f$ for a character of $G$ above, so I changed it to \chi
%% Added a reference to Course in Arithmetic above. It's not a perfect reference, but I think it's good enough for our purposes.

The next lemma reduces the proof of Theorem~\ref{thm:main} to the task of bounding two expressions in its error term. In addition to the notation $\ell(q)$ for the least prime not dividing $q$, we define $f(q)$ to be the multiplicative order of $\ell(q)$ modulo~$q$. We record the helpful estimate $\ell(q) \ll \log q$, which holds because the product of all the primes less than $2\log q$ (say) is larger than $q$ when $q$ is sufficiently large, by the prime number theorem.

\begin{lem}\label{lem:rewrite}
Let $\chi$ be a nonprincipal character$\pmod q$. For any real number $Y$ sat\-isfying $\ell(q)<Y\le c_2\log^5 q$ (where $c_2$ is the constant from Lemma~\ref{lem:nottoomany}), we have
\[
\frac{1}{\phi(q)-1} \sum_{\substack{\chi\pmod q \\ \chi \ne \chi_0}} n_{\chi} = \ell(q) + O\bigg( \frac1{\phi(q)} \sum_{\substack{\chi\pmod q \\ \chi \ne \chi_0 \\ Y < n_\chi \le c_2\log^5q}} n_\chi + \frac Y{f(q)} + q^{-1/3} \bigg).
\]
\end{lem}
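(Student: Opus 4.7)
Since every prime $p < \ell(q)$ divides $q$, any integer $n$ with $1 < n < \ell(q)$ shares a prime factor with $q$, so $\chi(n)=0$. Thus $n_\chi \ge \ell(q)$ for every nonprincipal $\chi\pmod q$, with equality if and only if $\chi(\ell(q)) \ne 1$ (note $\gcd(\ell(q),q)=1$, so $\chi(\ell(q))\ne 0$). The characters with $\chi(\ell(q))=1$ are precisely those trivial on the cyclic subgroup $\langle \ell(q)\rangle \le (\Z/q\Z)^\times$, which has order $f(q)$ by definition. Lemma~\ref{lem:vanishing characters} therefore gives exactly $\phi(q)/f(q)$ such characters, of which $\phi(q)/f(q)-1$ are nonprincipal.

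Writing $\ell = \ell(q)$ for brevity, the plan is to use the identity
\[
\sum_{\chi \ne \chi_0} n_\chi = (\phi(q)-1)\ell + \sum_{\substack{\chi \ne \chi_0 \\ n_\chi > \ell}} (n_\chi - \ell),
\]
which already produces the main term $\ell(q)$ after division by $\phi(q)-1$. The proof then reduces to bounding the second sum, which I would split into three ranges according to the size of $n_\chi$: (i) $\ell < n_\chi \le Y$, (ii) $Y < n_\chi \le c_2\log^5 q$, and (iii) $n_\chi > c_2\log^5 q$. Range (i) contains at most $\phi(q)/f(q)-1$ characters, each contributing at most $Y$; dividing by $\phi(q)-1$ yields $O(Y/f(q))$, the second error term. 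Range (ii) is bounded trivially by $\sum_{Y < n_\chi \le c_2\log^5 q} n_\chi$, which after dividing by $\phi(q)$ is exactly the first error term. For range (iii), Lemma~\ref{lem:nottoomany} says there are only $O(q^{0.43})$ such characters, and Norton's unconditional bound $n_\chi \ll_{\epsilon} q^{1/(4\sqrt{\e}) + \epsilon}$ (quoted in the introduction) controls each summand; since $\phi(q) \gg q/\log\log q$, the total contribution is at most $O(q^{0.43 + 1/(4\sqrt\e) + \epsilon}/\phi(q)) = O(q^{-1/3})$ once $\epsilon$ is taken small enough.

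The proof is essentially bookkeeping, once the key structural observation is in place: $n_\chi = \ell(q)$ is the ``generic'' behavior, and the failure of generic behavior is detected by the group-theoretic condition $\chi(\ell(q))=1$, which is counted precisely by Lemma~\ref{lem:vanishing characters}. The only step requiring real care is range (iii), where one must verify that a genuinely nontrivial pointwise bound on $n_\chi$ (Norton's, rather than the trivial bound $n_\chi \le q$) combines cleanly with the $q^{0.43}$ estimate from Lemma~\ref{lem:nottoomany} to produce the $q^{-1/3}$ savings demanded by the statement.
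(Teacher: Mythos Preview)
Your proof is correct and follows essentially the same approach as the paper: the same four-way split of the range of $n_\chi$, the same use of Lemma~\ref{lem:vanishing characters} to count characters with $\chi(\ell(q))=1$, and the same combination of Lemma~\ref{lem:nottoomany} with Norton's bound for the tail. Your use of the identity $\sum_{\chi\ne\chi_0} n_\chi = (\phi(q)-1)\ell(q) + \sum_{n_\chi>\ell(q)}(n_\chi-\ell(q))$ is a slightly tidier way of extracting the main term than the paper's explicit computation of $\frac{\phi(q)-\phi(q)/f(q)}{\phi(q)-1}\,\ell(q)$, but the content is identical.
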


\begin{proof}
We begin by writing
\begin{multline}
\label{four-way split}
\frac{1}{\phi(q)-1} \sum_{\substack{\chi\pmod q \\ \chi \ne \chi_0}} n_{\chi} = \frac{1}{\phi(q)-1} \sum_{\substack{\chi\pmod q \\ \chi \ne \chi_0 \\ n_\chi=\ell(q)}} n_{\chi} \\
+ O\bigg( \frac{1}{\phi(q)} \sum_{\substack{\chi\pmod q \\ \chi \ne \chi_0 \\ \ell(q) < n_\chi \le Y}} n_{\chi} + \frac{1}{\phi(q)} \sum_{\substack{\chi\pmod q \\ \chi \ne \chi_0 \\ Y < n_\chi \le c_2\log^5q}} n_{\chi} + \frac{1}{\phi(q)} \sum_{\substack{\chi\pmod q \\ \chi \ne \chi_0 \\ n_\chi > c_2\log^5q}} n_{\chi} \bigg).
\end{multline}
By Lemma~\ref{lem:vanishing characters} applied to the subgroup of $(\Z/q\Z)^\times$ generated by $\ell(q)$, which has order $f(q)$, the number of characters $\chi \pmod q$ with $\chi(\ell(q))=1$ equals $\phi(q)/f(q)$. It follows that %the first term on the right-hand side of equation~\eqref{four-way split} equals
\begin{align*}
\frac{1}{\phi(q)-1} \sum_{\substack{\chi\pmod q \\ \chi \ne \chi_0 \\ n_\chi=\ell(q)}} n_{\chi} &= \frac{\phi(q)-\phi(q)/f(q)}{\phi(q)-1} \ell(q) \\
&= \ell(q) + O\bigg( \frac{\ell(q)}{f(q)} \bigg) = \ell(q) + O\bigg( \frac Y{f(q)} \bigg),
\end{align*}
while the first error term in equation~\eqref{four-way split} can be bounded by
\[
\frac{1}{\phi(q)} \sum_{\substack{\chi\pmod q \\ \chi \ne \chi_0 \\ \ell(q) < n_\chi \le Y}} n_{\chi} \le \frac{1}{\phi(q)} \bigg( \frac{\phi(q)}{f(q)} - 1 \bigg) Y \ll \frac Y{f(q)}.
\]
As for the last error term in equation~\eqref{four-way split}, we use the fact that $n_\chi \ll q^{1/5}$ (this follows from Norton's result quoted in the introduction, since $\frac1{4\sqrt e} < \frac15$). Using this estimate in conjunction with Lemma~\ref{lem:nottoomany}, we have the bound
\[
\frac{1}{\phi(q)} \sum_{\substack{\chi\pmod q \\ \chi \ne \chi_0 \\ n_\chi > c_2\log^5q}} n_{\chi} \ll \frac1{\phi(q)} q^{0.43} \cdot q^{1/5} \ll q^{-1/3}
\]
(here we have used $\phi(q) \gg q/\log\log q \gg q^{0.99}$). The lemma now follows from equation~\eqref{four-way split} and the subsequent estimates.
\end{proof}

Before we proceed, we must quote one more theorem from the literature. This result, due to Baker and Harman~\cite{BH96, BH98}, asserts that many shifted primes possess a large prime factor.

\begin{prop}\label{prop:BH} For each positive real number $\theta \leq 0.677$, there is a constant $c_{\theta} > 0$ with the following property: For $x$ sufficiently large in terms of $\theta$, the number of primes $p \leq x$ with $P(p-1) > x^{\theta}$ is $> c_{\theta}x/\log{x}$.
\end{prop}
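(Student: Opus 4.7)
The plan is to reduce the problem to a lower bound on the distribution of primes in arithmetic progressions to large moduli. Every prime $p\leq x$ with $P(p-1)>x^{\theta}$ admits a representation $p=qm+1$ with $q>x^{\theta}$ prime and $m<x^{1-\theta}$, and each such $p$ arises in at most $\lfloor 1/\theta\rfloor = O_{\theta}(1)$ ways. Thus it suffices to prove
\[
S := \sum_{x^\theta < q\leq x}\pi(x;q,1) \gg_{\theta} \frac{x}{\log{x}},
\]
where $\pi(x;q,1)$ counts primes $p\leq x$ with $p\equiv 1\pmod{q}$. Heuristically, the prime number theorem in arithmetic progressions gives $\pi(x;q,1)\sim \Li(x)/\phi(q) \sim x/(q\log{x})$, and summing this over primes $q\in(x^{\theta},x/\log{x})$ via Mertens' theorem yields $S\asymp_{\theta} x/\log{x}$, which is exactly the order we want.

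The rigorous justification splits according to the size of $q$. For moduli $q\leq x^{1/2}/(\log{x})^{B}$, the Bombieri--Vinogradov theorem replaces $\pi(x;q,1)$ by $\Li(x)/\phi(q)$ with power-saving error on average, and partial summation over primes $q$ in the residual dyadic ranges handles the case $\theta<1/2$. For $\theta$ as large as $0.677$, however, one must control $\pi(x;q,1)$ for moduli well beyond the Bombieri--Vinogradov barrier at~$x^{1/2}$, and here I would follow the strategy of Baker and Harman: apply Harman's sieve in conjunction with Heath-Brown's identity to decompose the prime-counting function into Type~I (smooth) and Type~II (bilinear) pieces. The Type~I sums reduce via character-sum bounds (P\'olya--Vinogradov and the large sieve inequality) to manageable error terms, while the Type~II sums reduce, after Cauchy--Schwarz and orthogonality of characters, to mean value estimates for Dirichlet polynomials---most notably Huxley's zero-density bounds and a hybrid fourth-moment estimate of the form
\[
\sum_{\chi\pmod{q}}\int_{0}^{T}\bigl|L(\tfrac12+it,\chi)\bigr|^{4}\,dt\ll qT(\log qT)^{4}.
\]

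The main obstacle is the Type~II estimate in the transition zone $x^{1/2}\leq q\leq x^{0.677}$: one must arrange the sieve decomposition so that the dyadic ranges of the bilinear variables always lie where the available Dirichlet polynomial mean values are sharp enough to beat the trivial bound. The precise exponent $0.677$ reflects the limit of currently available mean-value and zero-density technology, and any genuine improvement would require new analytic input of the same caliber as that developed in \cite{BH96, BH98}.
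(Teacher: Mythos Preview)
The paper does not prove this proposition at all; it is quoted as a result of Baker and Harman \cite{BH96, BH98}, with no argument given beyond the citation. Your proposal is therefore not competing with a proof in the paper but rather sketching the content of the cited references.

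As a sketch, your reduction is correct: counting primes $p\le x$ with $P(p-1)>x^{\theta}$ amounts, up to an $O_{\theta}(1)$ multiplicity, to lower-bounding $\sum_{x^{\theta}<q\le x}\pi(x;q,1)$, and the case $\theta<1/2$ does follow from Bombieri--Vinogradov. Your high-level description of the Baker--Harman machinery (Harman's sieve, Heath-Brown's identity, Type~I/II decomposition, mean-value and zero-density estimates for Dirichlet polynomials) is also accurate. However, none of the genuine content of \cite{BH96, BH98} is actually reproduced: the entire difficulty lies in designing the sieve decomposition so that every resulting bilinear sum lands in a range where the available mean-value technology is sharp enough, and you explicitly defer that step to the original papers. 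So your proposal is, in effect, the same citation the paper makes, accompanied by a plausible outline. If the intent is merely to indicate why the result is believable and where to find it, the sketch is adequate; if the intent is to supply an independent proof, there is no self-contained argument here beyond the range $\theta<1/2$.
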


The following somewhat strange dichotomy will be useful to us in the proof of Theorem~\ref{thm:main}. For the rest of this section, define
\[
X = X(q) = \frac{(\log\log q)^3}{\log\log\log q}.
\]

\begin{lem}
\label{lem:strange dichotomy}
If $q$ is a sufficiently large integer, then either $f(q) \gg \exp((\log\log q)^2)$ or there exist at least six primes less than $X$ that do not divide $q$.
\end{lem}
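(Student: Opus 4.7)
The plan is to prove the contrapositive: supposing that at most five primes below $X$ fail to divide $q$, I will deduce $f(q) \gg \exp((\log\log q)^2)$ by exhibiting many distinct large prime factors of $f(q)$. Applying Baker--Harman (Proposition~\ref{prop:BH}) with $\theta = 0.677$ furnishes $\gg X/\log X$ primes $p \leq X$ with $P(p-1) > X^{\theta}$. Since at most five of these can fail to divide $q$, the set
\[ \mathcal{B} = \bigl\{ p \leq X : p \mid q \text{ and } P(p-1) > X^{\theta} \bigr\} \]
still satisfies $|\mathcal{B}| \gg X/\log X$. For $p \in \mathcal{B}$ put $r_p := P(p-1)$ and $m_p := (p-1)/r_p$, so that $r_p > X^{\theta}$ and $m_p < X^{1-\theta}$. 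Writing $\ell = \ell(q)$, one has $r_p \mid \operatorname{ord}_p(\ell)$---and hence $r_p \mid f(q)$---unless $\ell^{m_p} \equiv 1 \pmod p$, in which case $p \mid \ell^m - 1$ for some $1 \leq m \leq X^{1-\theta}$.

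The crux of the argument is that such ``bad'' primes form a negligible subset of $\mathcal{B}$. Using the standard inequality $\omega(n) \leq \log_2 n$ together with the standing estimate $\ell \ll \log q$, the total number of bad primes is bounded by
\[ \sum_{m \leq X^{1-\theta}} \omega(\ell^m - 1) \;\ll\; X^{2(1-\theta)} \log\log q, \]
which is $o(X/\log X)$ because $\theta > \tfrac12$. Hence $\gg X/\log X$ ``good'' primes remain in $\mathcal{B}$, each contributing $r_p \mid f(q)$.

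To harvest many \emph{distinct} prime divisors of $f(q)$, I observe that for any fixed prime $r > X^{\theta}$, at most $X/r < X^{1-\theta}$ primes $p \leq X$ can satisfy $r \mid p - 1$. Consequently the values $r_p$ among the good primes represent at least $\gg X^{\theta}/\log X$ distinct primes, each dividing $f(q)$ and each exceeding $X^{\theta}$. Multiplying them together gives
\[ \log f(q) \;\geq\; \frac{X^{\theta}}{\log X}\cdot \theta \log X \;=\; \theta X^{\theta} \;=\; \theta\cdot\frac{(\log\log q)^{3\theta}}{(\log\log\log q)^{\theta}}, \]
which exceeds $(\log\log q)^2$ for all sufficiently large $q$, since $3\theta > 2$. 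I expect the bad-prime estimate to be the main technical hurdle; it succeeds precisely because Baker--Harman permits $\theta > \tfrac12$, and the final step additionally uses that $\theta$ may be chosen above $\tfrac23$.
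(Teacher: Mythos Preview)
Your proof is correct and follows essentially the same route as the paper's own argument (which uses $\theta=0.67$ in place of $0.677$, with the same Baker--Harman/bad-prime/pigeonhole structure). One small slip in your justification: the estimate $X^{2(1-\theta)}\log\log q = o(X/\log X)$ does \emph{not} follow merely from $\theta>\tfrac12$, since $\log\log q\asymp X^{1/3}(\log\log\log q)^{1/3}$ forces the stronger condition $2(1-\theta)+\tfrac13<1$, i.e., $\theta>\tfrac23$; but as you in fact take $\theta=0.677>\tfrac23$ (and note this threshold yourself at the end), the argument goes through.
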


\begin{rmk}
The proof actually yields many more than six small primes: when $f(q)$ is small, we could conclude that there are $\gg X/\log X$ primes less than $X$ that do not divide~$q$. We will not need this stronger conclusion, however.
\end{rmk}

\begin{proof}
Define $S_0$ to be the set of primes $p\le X$ dividing $q$ and satisfying $P(p-1) > X^{0.67}$. By Proposition~\ref{prop:BH}, there are $\gg X/\log{X}$ primes $p \leq X$ with $P(p-1) > X^{0.67}$. Let us assume that there are at most five primes less than $X$ that do not divide $q$ (so that we want to derive the lower bound $f(q) \gg \exp((\log\log q)^2)$); then almost all of these primes obtained from Proposition~\ref{prop:BH} are actually divisors of $q$ (when $q$ is sufficiently large), and so $\#S_0 \gg X/\log X$.
%% changed ``almost all of these primes'' to ``almost all of these primes obtained from Proposition~\ref{prop:BH}'', so that it's clear ``almost all'' doesn't refer to ``almost all'' of the five exceptional primes

Define $S$ to be the subset of $S_0$ consisting of those primes in $S_0$ having the additional property that the multiplicative order of $\ell(q)$ modulo $p$ is divisible by $P(p-1)$. We claim that $\#S\gg X/\log X$ as well. To see this, note that if $p$ does not have this property, then the order of $\ell(q)$ modulo $p$ is a divisor of $(p-1)/P(p-1)$ and so is less than $X^{0.33}$; hence, $p$ divides an integer of the form $\ell(q)^j-1$ for some positive integer $j\le X^{0.33}$. On the other hand, each number $\ell(q)^j-1$ has at most $\log(\ell(q)^j-1)/\log2 \ll j\log \ell(q)$ prime factors; furthermore, $\ell(q) \ll \log q$. Therefore the total number of prime factors of all the numbers $\ell(q)^j-1$ is
\[
\ll \sum_{j\le X^{0.33}} j\log \ell(q) \ll X^{0.66} \log\log q \ll X^{0.995}
\]
by the definition of~$X$. Therefore $\#(S_0\setminus S) \ll X^{0.995} = o(X/\log X)$, and hence $\#S \gg X/\log X$ as claimed.

If $r$ is a prime exceeding $X^{0.67}$, then the number of $p \in S$ for which $P(p-1)=r$ is clearly bounded by the number of integers in $(1,X]$ that 
%% changed ``not exceeding $X$'' to ``in $(1,X]$'', since I think we want to omit $1$ from the count.
are congruent to $1\pmod r$, which is at most $X/r < X^{0.33}$. Hence, the number of distinct values of $P(p-1)$, as $p$ ranges over $S$, is $\gg X^{0.67}/\log{X}$. Since each prime $p$ divides $q$, we know that $f(q)$ is divisible by the order of $\ell(q)\pmod p$ for all $p\in S$, hence divisible by all of these $\gg X^{0.67}/\log{X}$ distinct primes $P(p-1)$, each of which exceeds $X^{0.67}$. It follows that there exist positive constants $c_3$ and $c_4$ such that
\begin{align*}
f(q) \ge (X^{0.67})^{c_3 X^{0.67}/\log{X}} &\ge \exp(c_4 X^{0.67}) \\
&= \exp \bigg( \frac{c_4(\log\log q)^{2.01}}{(\log\log\log q)^{0.67}} \bigg) \gg \exp\big( (\log\log q)^2\big)
\end{align*}
as desired.
\end{proof}

\begin{proof}[Proof of Theorem~\ref{thm:main}]
We may assume that $q$ is sufficiently large. Suppose first that $f(q) \gg \exp((\log\log q)^2)$. Taking $Y=c_2\log^5 q$ in Lemma~\ref{lem:rewrite}, we see that
\[
\frac{1}{\phi(q)-1} \sum_{\substack{\chi\pmod q \\ \chi \ne \chi_0}} n_{\chi} = \ell(q) + O\bigg( \frac{\log^5q}{f(q)} + q^{-1/3} \bigg) = \ell(q) + O\bigg( \frac1{\exp((\log\log q)^{3/2})} \bigg),
\]
say, which is stronger than the error term in the statement of the theorem. By Lemma~\ref{lem:strange dichotomy}, the only case left to consider is the case where there exist at least six primes $p_1,\dots,p_6$ less than $X$ that do not divide~$q$.

In particular, $\ell(q) < X$ in this case, so we may take $Y=X$ in Lemma~\ref{lem:strange dichotomy} to obtain
\begin{align*}
\frac{1}{\phi(q)-1} \sum_{\substack{\chi\pmod q \\ \chi \ne \chi_0}} n_{\chi} &= \ell(q) + O\bigg( \frac1{\phi(q)}\sum_{\substack{\chi\pmod q \\ \chi \ne \chi_0 \\ X < n_\chi \le c_2\log^5q}} n_\chi + \frac X{f(q)} + q^{-1/3} \bigg) \\
&= \ell(q) + O\bigg( \frac{\log^5q}{\phi(q)} \#\big\{ \chi \pmod q \colon n_\chi > X \big\} + \frac X{f(q)} + q^{-1/3} \bigg).
\end{align*}
Since $q \mid (\ell(q)^{f(q)}-1)$, we have the trivial lower bound $f(q) \ge (\log q)/\log\ell(q)$, and so $X/f(q) \le (X \log\ell(q))/\log{q} < X\log{X}/\log{q}\ll (\log\log{q})^3/\log{q}$. The theorem therefore follows if we can show that
\begin{equation}
\label{not too many characters}
\#\big\{ \chi \pmod q \colon n_\chi > X \big\} \ll \phi(q) \frac{(\log\log q)^3}{(\log q)^6}.
\end{equation}

Let $H$ be the subgroup of $(\Z/q\Z)^{\times}$ generated by (the images of) $p_1, \dots, p_6$. If $n_{\chi}>X$, then $\chi(h)=1$ for all $h\in H$; thus by Lemma~\ref{lem:vanishing characters},
\[
\#\big\{ \chi \pmod q \colon n_\chi > X \big\} \le \#\big\{ \chi \pmod q\colon \chi(h)=1 \text{ for all }h\in H \big\} = \frac{\phi(q)}{\#H}.
\]
However, the order of $H$ is at least the number of integers less than $q$ that factor as a product of the $p_i$, which includes every integer of the form $p_1^{\alpha_1} \cdots p_6^{\alpha_6}$ with 
$0\le \alpha_1,\dots,\alpha_6 < \log{q}/(6\log{X})$. Therefore $\#H \ge 6^{-6} ((\log{q})/\log{X})^6$, and so
%% I think the alpha_i should only range up to $\log{q}/(6\log{X})$.
\[
\#\big\{ \chi \pmod q \colon n_\chi > X \big\} \ll \frac{\phi(q)}{((\log q)/\log X)^6} \ll \phi(q) \frac{(\log\log\log{q})^6}{(\log{q})^6}.
\]
This upper bound is stronger than the required estimate~\eqref{not too many characters}, which completes the proof of the theorem.
\end{proof}

\begin{rmk} It is known that $f(q) > q^{1/2}$ for almost all $q$, in the sense of asymptotic density. (This lower bound, and a bit more, follows from \cite[Theorem 1]{KP05}.) For such $q$, we deduce quickly from Lemma \ref{lem:rewrite} that the average of $n_{\chi}$ is $\ell(q) + O(q^{-1/50})$.  Thus, the estimate of Theorem \ref{thm:main} is interesting only for integers $q$ for which $f(q)$ is abnormally small. Our proof of Theorem \ref{thm:main} can be modified to show that the average of $n_{\chi}$ is always $\ell(q) + O\big( X/f(q) + \exp(-(\log\log{q})^{2}) \big)$ in the above notation, and that the term $X/f(q)$ can be omitted when $\ell (q) > X$. 
%% I checked this. I think it's OK. I also changed 1.3 to 2 to correspond to your choice of $X$. Here's a sketch of what I had in mind: If $f(q)$ is huge, the claim is true (this is basically the same argument as in the current first paragraph of the proof of Theorem 1.1). If $f(q)$ is small, we use that there are $\gg X/\log{X}$ primes up to $X$ not dividing $q$ and then use a lower bound on #H similar to the one already appearing in our proof (but with the number $6$ replaced by $c X/log{X}$).
\end{rmk}

\section{The least character nonresidue averaged over the modulus~$q$}

Having established an asymptotic formula for the average of the least character nonresidue $n_\chi$ over all Dirichlet characters $\chi$ to a single modulus, we turn now to averaging $n_\chi$ over all characters with conductor less than~$x$. Theorem~\ref{thm:main} tells us that almost all of the $\phi(q)$ characters modulo $q$ have $n_\chi = \ell(q)$; therefore we should expect the average value of $n_\chi$ over all characters of conductor less than $x$ to be closely related to the sum $\sum_{q\le x} \phi(q)\ell(q)$. This is indeed the case, as the proof of Corollary~\ref{cor:finalavg} later in this section will show; the majority of the work is in showing how the constant $\Delta$ defined in equation~\eqref{eq:Delta def} arises in connection with that sum (see Proposition~\ref{prop:main sum}). At the end of this section, we outline how the same methods can be applied to averages over only certain characters; as a concrete example, we establish analogues of Theorem~\ref{thm:main} and Corollary~\ref{cor:finalavg} where we average only over primitive characters.

Our first two lemmas establish an asymptotic formula for the summatory function of $\phi(n)$ over integers relatively prime to a fixed number $m$, with enough uniformity in $m$ for our later purposes.

\begin{lem}\label{lem:Rankin twice}
Given a positive integer $m$, let $h(d)$ denote the largest divisor of $d$ that is coprime to~$m$. For all $x\ge3$,
\[
x^2 \sum_{d > x} \frac{\mu^2(d)}{dh(d)} + x\sum_{d\le x} \frac{\mu^2(d)}{h(d)} \ll 2^{\omega(m)} x\log x.
\]
\end{lem}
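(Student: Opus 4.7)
The plan is to reduce both sums to sums over integers coprime to $m$ by factoring each squarefree $d$ into its $m$-part and its coprime-to-$m$ part, then bound the two resulting pieces by standard elementary estimates. The name \emph{Rankin twice} suggests this: the two displayed sums are handled by the same device, once each.

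Concretely, every squarefree $d$ factors uniquely as $d = d_1d_2$ with $(d_1d_2, \cdot)$ satisfying $d_1 \mid \operatorname{rad}(m)$ (all prime factors dividing $m$) and $(d_2, m)=1$. Under this factorization, $h(d) = d_2$, so $dh(d) = d_1 d_2^2$, and $d_1$ ranges over the $2^{\omega(m)}$ divisors of $\operatorname{rad}(m)$ while $d_2$ ranges over squarefree integers coprime to $m$. The first sum thus equals
\[
\sum_{d > x} \frac{\mu^2(d)}{dh(d)} = \sum_{d_1 \mid \operatorname{rad}(m)} \frac{1}{d_1} \sum_{\substack{d_2 > x/d_1 \\ (d_2,m)=1}} \frac{\mu^2(d_2)}{d_2^2},
\]
and the second equals
\[
\sum_{d \le x} \frac{\mu^2(d)}{h(d)} = \sum_{d_1 \mid \operatorname{rad}(m)} \sum_{\substack{d_2 \le x/d_1 \\ (d_2,m)=1}} \frac{\mu^2(d_2)}{d_2}.
\]

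For the first sum, I would drop the coprimality condition in the inner sum and bound trivially by $\sum_{d_2 > x/d_1} 1/d_2^2 \ll d_1/x$. Multiplying by the prefactor $1/d_1$ and summing over $d_1 \mid \operatorname{rad}(m)$ produces $\ll 2^{\omega(m)}/x$, so after scaling by $x^2$ we obtain $\ll 2^{\omega(m)} x$, which is even sharper than what is claimed. For the second sum, I would drop the coprimality again and use the standard estimate $\sum_{d_2 \le y} 1/d_2 \ll \log(y+2) \ll \log x$ (valid for $d_1 \le x$; the inner sum is empty otherwise). Summing trivially over the $2^{\omega(m)}$ choices of $d_1$ and multiplying by $x$ gives the term $\ll 2^{\omega(m)} x \log x$, which is the dominant contribution.

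There is no real obstacle: both estimates are immediate after the $d = d_1 d_2$ decomposition, and the only place where care is needed is to observe that squarefreeness of $d$ forces $d_1$ to range over divisors of $\operatorname{rad}(m)$ (hence the exact count $2^{\omega(m)}$). Combining the two bounds yields the stated inequality.
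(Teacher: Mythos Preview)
Your proof is correct, but it takes a genuinely different route from the paper's. The paper actually uses Rankin's trick in the standard analytic sense: for any $0<\epsilon<1$ it inserts the factor $(d/x)^{1-\epsilon}$ into the tail sum and $(x/d)^{\epsilon}$ into the initial sum, thereby merging both into the single Dirichlet series $x^{1+\epsilon}\sum_{d\ge 1}\mu^2(d)/(d^{\epsilon}h(d))$. This series is then evaluated as an Euler product, bounded by $2^{\omega(m)}\zeta(1+\epsilon)/\zeta(2+2\epsilon)\ll 2^{\omega(m)}\epsilon^{-1}$, and the choice $\epsilon=1/\log x$ finishes the job. So the label ``Rankin twice'' refers to two genuine applications of Rankin's inequality, not merely to handling the two sums by the same device.

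Your argument instead bypasses Rankin entirely: the factorisation $d=d_1d_2$ with $d_1\mid\operatorname{rad}(m)$ and $(d_2,m)=1$ reduces everything to the elementary tail bound $\sum_{n>y}n^{-2}\ll 1/y$ and the harmonic sum bound $\sum_{n\le y}n^{-1}\ll\log y$. This is shorter and more elementary, and it even yields the sharper estimate $x^2\sum_{d>x}\mu^2(d)/(dh(d))\ll 2^{\omega(m)}x$ for the first piece. The paper's approach, on the other hand, has the virtue of unifying the two sums into a single object and would adapt more readily to variants where the summand is a general multiplicative function rather than one with such a clean $d_1d_2$-splitting.
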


\begin{proof}
We employ Rankin's method. For any real number $0<\epsilon<1$, we have
\begin{multline*}
x^2 \sum_{d > x} \frac{\mu^2(d)}{dh(d)} + x\sum_{d\le x} \frac{\mu^2(d)}{h(d)} 
\\< x^2 \sum_{d > x} \frac{\mu^2(d)}{dh(d)} \bigg( \frac dx \bigg)^{1-\epsilon} + x \sum_{d\le x} \frac{\mu^2(d)}{h(d)} \bigg( \frac xd \bigg)^\epsilon = x^{1+\epsilon} \sum_{d=1}^\infty \frac{\mu^2(d)}{d^\epsilon h(d)}.
\end{multline*}
The right-hand side can be expressed as the convergent Euler product
\begin{align*}
x^{1+\epsilon} \prod_p \bigg( 1 + \frac1{p^\epsilon h(p)} \bigg) &= x^{1+\epsilon} \prod_{p\mid m} \bigg( 1 + \frac1{p^\epsilon} \bigg) \prod_{p\nmid m} \bigg( 1 + \frac1{p^{1+\epsilon}} \bigg) \\
&\le x^{1+\epsilon}  2^{\omega(m)} \prod_p \bigg( 1 + \frac1{p^{1+\epsilon}} \bigg) \\
&= x^{1+\epsilon}  2^{\omega(m)} \frac{\zeta(1+\epsilon)}{\zeta(2+2\epsilon)} \ll 2^{\omega(m)} x^{1+\epsilon} \epsilon^{-1}.
\end{align*}
Choosing $\epsilon = 1/\log x$ establishes the required estimate.
\end{proof}

\begin{lem}\label{lem:sumphi} Let $m$ be a natural number. For $x \ge 2$,
	\[ \sum_{\substack{n \leq x \\\gcd(n,m)=1}} \phi(n) = \frac{3x^2}{\pi^2} \prod_{p \mid m}\bigg(1+\frac1p\bigg)^{-1} + O(2^{\omega(m)} x\log x)
	\]
uniformly in $m$.
\end{lem}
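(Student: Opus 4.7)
The plan is to use the identity $\phi(n) = \sum_{d \mid n}\mu(d)(n/d)$ and then handle the coprimality condition by a second M\"obius inversion. Writing $n = de$ with $d$ squarefree, the condition $\gcd(n,m)=1$ is equivalent to $\gcd(d,m) = \gcd(e,m) = 1$, so
\[
\sum_{\substack{n \leq x \\ \gcd(n,m)=1}} \phi(n) = \sum_{\substack{d \leq x \\ \gcd(d,m)=1}} \mu(d) \sum_{\substack{e \leq x/d \\ \gcd(e,m)=1}} e.
\]

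Next I would evaluate the inner sum $T(y,m) := \sum_{e \leq y,\,\gcd(e,m)=1} e$ by M\"obius inverting the coprimality constraint:
\[
T(y,m) = \sum_{f \mid m} \mu(f)\, f \cdot \frac{\lfloor y/f\rfloor(\lfloor y/f\rfloor+1)}{2} = \frac{y^2}{2}\cdot\frac{\phi(m)}{m} + O\bigg( y^2 \sum_{\substack{f \mid m \\ f > y}} \frac{\mu^2(f)}{f} + y\cdot 2^{\omega(m)} \bigg).
\]
Substituting this into the outer sum and extracting the main term, I would use the Euler product $\sum_{d\geq 1,\,\gcd(d,m)=1}\mu(d)/d^2 = (6/\pi^2)\prod_{p \mid m}(1-1/p^2)^{-1}$, the tail estimate $\sum_{d > x}\mu^2(d)/d^2 \ll 1/x$, and the telescoping $(1-1/p)/(1-1/p^2) = 1/(1+1/p)$ to arrive at the claimed main term $\frac{3x^2}{\pi^2}\prod_{p\mid m}(1+1/p)^{-1}$ up to an $O(x)$ error.

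The error analysis is where Lemma~\ref{lem:Rankin twice} enters. The $O(y \cdot 2^{\omega(m)})$ remainder in $T(y,m)$ contributes at most $x\cdot 2^{\omega(m)}\sum_{d\leq x,\,\gcd(d,m)=1}\mu^2(d)/d \ll 2^{\omega(m)} x\log x$ by the elementary bound $\sum_{d \leq x}\mu^2(d)/d \ll \log x$. For the remaining term $O(y^2 \sum_{f>y,\,f\mid m}\mu^2(f)/f)$, swapping the order of summation and substituting $d' = df$ (so that $d'$ is squarefree with $f = \gcd(d',m^\infty)$ and $d = h(d')$) bounds the contribution by
\[
x^2 \sum_{\substack{d' > x \\ d' \text{ squarefree}}} \frac{\mu^2(d')}{d'\, h(d')},
\]
which is exactly the first quantity estimated in Lemma~\ref{lem:Rankin twice} and hence $\ll 2^{\omega(m)} x\log x$.

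The main organizational point — and evidently the reason Lemma~\ref{lem:Rankin twice} is phrased as it is — is recognizing that the natural tail error from the double M\"obius inversion, after the substitution $d' = df$, fits the $h(d') = d'/\gcd(d',m^\infty)$ framework of the Rankin-style estimate. Once that identification is made, no further analytic input beyond elementary divisor sums is required.
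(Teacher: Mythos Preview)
Your argument is correct. The route differs from the paper's, though the ingredients are the same. The paper begins with the single multiplicative identity
\[
\frac{\phi(n)\chi_0(n)}{n}=\sum_{d\mid n}\frac{\mu(d)}{h(d)},
\]
which packages the totient and the coprimality condition into one convolution; after swapping the order of summation, \emph{both} expressions in Lemma~\ref{lem:Rankin twice} appear directly as the error terms. You instead perform two separate M\"obius inversions (one for $\phi$, one for $\gcd(\cdot,m)=1$), and then make the substitution $d'=df$ to recognize that the resulting double-sum error is bounded by the first of the two expressions in Lemma~\ref{lem:Rankin twice}. Your version is a bit longer but arguably more transparent, since each inversion is textbook; the paper's version is slicker and explains why Lemma~\ref{lem:Rankin twice} is stated as a sum of two terms. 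One small point worth making explicit: after the substitution the range is $d'>x$ together with $h(d')\le x$, and you are tacitly dropping the second condition to obtain an upper bound---this is fine, but deserves a word.
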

\begin{proof} Let $\chi_0$ be the principal character modulo~$m$, and let $h(d)$ denote the largest divisor of $d$ that is coprime to~$m$ (as in Lemma~\ref{lem:Rankin twice}). The identity $\phi(n) \chi_0(n)/n = \sum_{d\mid n} \mu(d)/h(d)$ is easy to verify (since both sides are multiplicative functions, it suffices to check the identity on prime powers); it follows that
\begin{align*}
\sum_{\substack{n \leq x \\\gcd(n,m)=1}} \phi(n) &= \sum_{n\le x} n \cdot \frac{\phi(n) \chi_0(n)}n = \sum_{n \leq x}n\sum_{d\mid n} \frac{\mu(d)}{h(d)} \\
&= \sum_{d \leq x}\frac{\mu(d)}{h(d)}\sum_{\substack{n \le x \\ d\mid n}} n = \sum_{d \leq x}\frac{\mu(d)}{h(d)} \sum_{e \le x/d} de \\
&= \sum_{d \leq x} \frac{\mu(d)}{h(d)}\bigg(\frac{x^2}{2d} + O(x)\bigg) = \frac{x^2}{2}\sum_{d \leq x}\frac{\mu(d)}{dh(d)} + O\bigg(x\sum_{d\le x} \frac{\mu^2(d)}{h(d)}\bigg) \\
&= \frac{x^2}{2} \sum_{d=1}^\infty \frac{\mu(d)}{dh(d)} + O\bigg(x^2 \sum_{d > x} \frac{\mu^2(d)}{dh(d)} + x\sum_{d\le x} \frac{\mu^2(d)}{h(d)}\bigg).
\end{align*}
The error term is $\ll 2^{\omega(m)}x\log x$ by Lemma~\ref{lem:Rankin twice}, while in the main term, we have
\begin{align*}
\sum_{d=1}^\infty \frac{\mu(d)}{dh(d)} &= \prod_{p \nmid m}\bigg(1 - \frac{1}{p^2}\bigg) \prod_{p \mid m}\bigg(1-\frac{1}{p}\bigg)
\\ &= \prod_{p}\bigg(1-\frac{1}{p^2}\bigg) \prod_{p \mid m}\bigg(1+\frac{1}{p}\bigg)^{-1} = \frac{6}{\pi^2}\prod_{p \mid m}\bigg(1+\frac{1}{p}\bigg)^{-1}
	\end{align*}
as claimed.
\end{proof}

Our next goal is to evaluate the sum $\sum_{q\le x} \phi(q)\ell(q)$ that features prominently in the proof of Corollary~\ref{cor:finalavg}. Our method hinges upon sorting these integers $q$ by their largest divisor divisible only by primes not exceeding $2\log x$; the next two lemmas establish some preliminary results concerning sums over these friable numbers.

\begin{lem}\label{lem:tenenbaum}
Let $x\ge3$, and set $y=2\log x$.
\begin{enumerate}
\renewcommand{\labelenumi}{\rm(\alph{enumi})}
\item $\displaystyle \sum_{\substack{m > x^{1/2} \\ \text{$m$ is $y$-friable}}} \frac1m \ll x^{-1/3}$;
\item There are $\ll x^{2/3}$ integers less than $x$ that have a $y$-friable divisor exceeding $x^{1/2}$.
\end{enumerate}
\end{lem}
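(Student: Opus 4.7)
The plan is to establish~(a) via Rankin's trick, analogously to Lemma~\ref{lem:Rankin twice}, and then to deduce~(b) as an almost immediate consequence of~(a). The only point requiring care is the choice of the Rankin parameter $\sigma$: it must lie strictly below $\tfrac13$ so that the savings $x^{-(1-\sigma)/2}$ beats $x^{-1/3}$ with enough slack to absorb the slow growth of the Euler product $\prod_{p\le y}(1-p^{-\sigma})^{-1}=x^{o(1)}$.

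For part~(a), I would fix $\sigma\in(0,1)$ and exploit the inequality $(m/x^{1/2})^{1-\sigma}\ge 1$ valid for $m>x^{1/2}$ to bound
\[
\sum_{\substack{m>x^{1/2}\\m\text{ is $y$-friable}}}\frac{1}{m} \le x^{-(1-\sigma)/2}\sum_{m\text{ is $y$-friable}}\frac{1}{m^{\sigma}} = x^{-(1-\sigma)/2}\prod_{p\le y}\bigl(1-p^{-\sigma}\bigr)^{-1}.
\]
Taking $\sigma=\tfrac14$ yields the prefactor $x^{-3/8}$, so since $-\tfrac38+\tfrac{1}{24}=-\tfrac13$ it will suffice to show that the Euler product is at most $x^{1/24}$ for $x$ sufficiently large. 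Taking logarithms and using the uniform bound $-\log(1-p^{-1/4})\ll p^{-1/4}$ valid for all primes~$p$, I would reduce this to the estimate $\sum_{p\le y}p^{-1/4}\ll y^{3/4}/\log y$, a routine partial-summation computation starting from the Chebyshev bound $\pi(t)\ll t/\log t$. Substituting $y=2\log x$ gives $O((\log x)^{3/4}/\log\log x)=o(\log x)$, which beats $(\log x)/24$ once $x$ is large enough; the inequality for bounded~$x$ is trivial and absorbed into the implicit constant.

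Part~(b) then follows from~(a) essentially by inspection: every integer $n\le x$ having a $y$-friable divisor $m>x^{1/2}$ can be written as $n=mk$ with $k\le x/m$, so the number of such~$n$ is at most
\[
\sum_{\substack{x^{1/2}<m\le x\\m\text{ is $y$-friable}}}\bigl\lfloor x/m\bigr\rfloor \le x\sum_{\substack{m>x^{1/2}\\m\text{ is $y$-friable}}}\frac{1}{m}\ll x\cdot x^{-1/3}=x^{2/3},
\]
as desired.
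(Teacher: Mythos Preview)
Your argument is correct. Part~(b) is identical to the paper's deduction. For part~(a), however, you take a genuinely different route: the paper cites de~Bruijn's estimate $\log\Psi(t,y)\ll(\log t)/\log y$ (valid for $y\le 4\log t$), which for $y=2\log x$ and $t\ge x^{1/2}$ gives $\Psi(t,y)<t^{1/3}$, and then writes the tail sum as a Stieltjes integral $\int_{x^{1/2}}^\infty t^{-1}\,d\Psi(t,y)$ and integrates by parts. You instead carry out Rankin's method from scratch with $\sigma=\tfrac14$ and bound the finite Euler product $\prod_{p\le y}(1-p^{-1/4})^{-1}$ directly via Chebyshev and partial summation. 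Your version is more self-contained and elementary, since de~Bruijn's bound is itself usually proved by an optimized Rankin argument; the paper's version is terser because it outsources that step to the literature. Either approach yields the stated $x^{-1/3}$ with room to spare.
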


\begin{proof}
We may assume that $x$ is sufficiently large. It follows from work of de Bruijn that $\log \Psi(t,y) \ll (\log t)/\log y$ uniformly for $t\ge2$ and $y\le 4\log t$ (see for example \cite[Theorem 2, p.\ 359]{tenenbaum95}). In
%% corrected $\ge 4\log{t}$ to $\leq 4\log{t}$.
particular, with $y=2\log{x}$, we have $\Psi(t,y) < t^{1/3}$ uniformly for $t\geq x^{1/2}$, once $x$ is sufficiently large. Hence,
%% rephrased the last sentence a bit to make it clear that $y$ is now our $y=2\log{x}$ and not a generic $y$.
\begin{align*}
\sum_{\substack{m > x^{1/2} \\ \text{$m$ is $y$-friable}}} \frac1m = \int_{x^{1/2}}^\infty \frac1t\, d\Psi(t,y)
&= -\frac{\Psi(x^{1/2},y)}{x^{1/2}} + \int_{x^{1/2}}^\infty \frac{\Psi(t,y)}{t^2}\,dt \\
&< 0 + \int_{x^{1/2}}^\infty \frac{t^{1/3}}{t^2}\,dt \ll x^{-1/3}
\end{align*}
when $x$ is sufficiently large, establishing part (a) of the lemma. Part (b) follows from part (a) upon noting that the number of integers less than $x$ that are divisible by a fixed $y$-friable integer $m>x^{1/2}$ is at most $x/m$ and summing over~$m$.
\end{proof}

\begin{lem}\label{lem:friable sum}
Let $x\ge3$, and set $y=2\log x$. Then
\[
\prod_{p \leq y}\bigg(1+\frac{1}{p}\bigg)^{-1} \sum_{\substack{m\le\sqrt x \\ \text{$m$ is $y$-friable}}} \frac{\phi(m)\ell(m)}{m^2} = \Delta + O\bigg( \frac1{\log x} \bigg),
\]
where $\Delta$ is defined in equation~\eqref{eq:Delta def}.
\end{lem}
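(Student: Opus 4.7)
The plan is to first extend the range of summation to include all $y$-friable positive integers, and then evaluate the resulting unrestricted sum exactly by a factorization into Euler products, matching term-by-term with the definition of $\Delta$.

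\textbf{Step 1: Extension to all $y$-friable $m$.} I first claim that every $y$-friable integer $m$ satisfies $\ell(m) \ll \log x$. Indeed, if not every prime $\le y$ divides $m$, then $\ell(m) \le y = 2\log x$; otherwise, $m$ is $y$-friable and divisible by all primes $\le y$, so $\ell(m)$ equals the least prime exceeding $y$, which is $\ll y$ by Bertrand's postulate. Combining this with $\phi(m)/m \le 1$ and Lemma~\ref{lem:tenenbaum}(a) yields
\[
\sum_{\substack{m > \sqrt x \\ m\text{ is $y$-friable}}} \frac{\phi(m)\ell(m)}{m^2} \ll (\log x) \sum_{\substack{m > \sqrt x \\ m\text{ is $y$-friable}}} \frac{1}{m} \ll (\log x)\, x^{-1/3}.
\]
Since $\prod_{p \le y}(1+1/p)^{-1} \ll 1$, this truncation error is far smaller than $1/\log x$, so it suffices to evaluate the sum with $m$ ranging over all $y$-friable positive integers.

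\textbf{Step 2: Splitting by $\ell(m)$ and the Euler product.} Next I partition the unrestricted sum according to the value of $\ell(m) = \ell$. For a prime $\ell \le y$, the conditions ``$m$ is $y$-friable'' and ``$\ell(m) = \ell$'' force $m = \prod_{p<\ell} p^{a_p}\prod_{\ell < p \le y} p^{b_p}$ with each $a_p \ge 1$ and $b_p \ge 0$, so the inner sum $\sum_{\ell(m)=\ell}\phi(m)/m^2$ factors as an Euler product. Using the local identities $\sum_{a\ge 1}\phi(p^a)/p^{2a} = 1/p$ and $\sum_{b\ge 0}\phi(p^b)/p^{2b} = (p+1)/p$, one checks that multiplying by $\ell$ and by the prefactor $\prod_{p \le y}(1+1/p)^{-1} = \prod_{p \le y} p/(p+1)$ yields exactly $\ell^2/\prod_{p\le\ell}(p+1)$; the factors from primes $p$ with $\ell < p \le y$ cancel against the corresponding factors of the prefactor, making the result independent of $y$. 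Summing over primes $\ell \le y$ reproduces the partial sum $\sum_{\ell \le y}\ell^2/\prod_{p\le\ell}(p+1)$.

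\textbf{Step 3: Controlling the tails.} The terms with $\ell(m) > y$ in the extended sum require every prime $\le y$ to divide $m$; their total contribution is bounded by $O\bigl((\log x)\prod_{p \le y} p^{-1}\bigr) = O(x^{-2+\epsilon})$ by the prime number theorem. Symmetrically, the tail $\sum_{\ell > y} \ell^2/\prod_{p \le \ell}(p+1)$ of $\Delta$ is $O((\log x)^2/x^2)$, because $\prod_{p\le\ell}(p+1) > e^{\theta(\ell)}$ renders the series super-exponentially decaying. Both tails are dwarfed by $1/\log x$, and combining with Steps~1 and~2 gives the stated asymptotic.

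The main obstacle is the bookkeeping in Step~2: one must verify that the local Euler factors at primes $p < \ell$, at $p = \ell$, and at $\ell < p \le y$ combine with the Mertens-type prefactor $\prod_{p \le y}(1+1/p)^{-1}$ to produce the clean, $y$-independent expression $\ell^2/\prod_{p\le\ell}(p+1)$. Once this cancellation is in place, the remaining steps are routine tail estimates.
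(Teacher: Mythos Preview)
Your proof is correct and follows essentially the same approach as the paper: split according to the value of $\ell(m)$, evaluate the resulting Euler product to recover the summand $\ell^2/\prod_{p\le\ell}(p+1)$, and control both the truncation at $m\le\sqrt{x}$ via Lemma~\ref{lem:tenenbaum}(a) and the tail $\ell>y$ of the $\Delta$-series. The only cosmetic differences are that you extend to all $y$-friable $m$ first and then split by $\ell(m)$ (forcing a brief treatment of the negligible case $\ell(m)>y$), whereas the paper splits by $\ell$ first and extends within each $\ell$; and you invoke the prime number theorem for sharper tail bounds where the paper uses only Chebyshev to land exactly on $O(1/\log x)$.
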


\begin{proof}
Throughout we may assume that $x$ and hence $y$ are sufficiently large. We first consider the terms in the sum for which the value of $\ell(m)$ is fixed:
\begin{equation}\label{eq:for a single ell}
\sum_{\substack{m\le\sqrt x \\ \text{$m$ is $y$-friable} \\ \ell(m)=\ell}} \frac{\phi(m)\ell(m)}{m^2} = \ell \bigg( \sum_{\substack{m\ge1 \\ \text{$m$ is $y$-friable} \\ \ell(m)=\ell}} \frac{\phi(m)}{m^2} + O\bigg( \sum_{\substack{m>\sqrt x \\ \text{$m$ is $y$-friable} \\ \ell(m)=\ell}} \frac1m \bigg) \bigg).
\end{equation}
The sum inside the error term of equation~\eqref{eq:for a single ell}, even without the condition $\ell(m)=\ell$, is bounded by a constant times $x^{-1/3}$ by Lemma~\ref{lem:tenenbaum}(a). On the other hand, the first sum on the right-hand side of~\eqref{eq:for a single ell} can be written as the product
\begin{align*}
\sum_{\substack{m\ge1 \\ \text{$m$ is $y$-friable} \\ \ell(m)=\ell}} \frac{\phi(m)}{m^2} &= \prod_{p < \ell}\bigg(\frac{\phi(p)}{p^2} + \frac{\phi(p^2)}{p^4} + \dots\bigg) \prod_{\ell < p\leq y}\bigg(1+\frac{\phi(p)}{p^2} + \frac{\phi(p^2)}{p^4} + \cdots\bigg) \\
&=\bigg(\prod_{p < \ell}\frac{1}{p}\bigg)\prod_{\ell < p \leq y}\bigg(1+\frac{1}{p}\bigg).
\end{align*}
Using this information and summing equation~\eqref{eq:for a single ell} over the possible values of $\ell(m)$, we conclude that
\begin{align*}
\prod_{p \leq y}\bigg(1+\frac{1}{p}\bigg)^{-1} & \sum_{\substack{m\le\sqrt x \\ \text{$m$ is $y$-friable}}} \frac{\phi(m)\ell(m)}{m^2} \notag \\
&= \prod_{p \leq y}\bigg(1+\frac{1}{p}\bigg)^{-1} \sum_{\ell \leq y} \ell \bigg( \bigg(\prod_{p < \ell}\frac{1}{p}\bigg)\prod_{\ell < p \leq y}\bigg(1+\frac{1}{p}\bigg) + O(x^{-1/3}) \bigg) \notag \\
&= \sum_{\ell\le y} \frac{\ell^2}{\prod_{p\le \ell}(p+1)} + O(x^{-1/3} y^2) \notag \\
&= \Delta + O\bigg( x^{-1/4} + \sum_{\ell> y} \frac{\ell^2}{\prod_{p\le \ell}(p+1)} \bigg).
\end{align*}
It suffices to show that the sum in this error term is $O(1/\log x)$.

When $x$ is sufficiently large, each summand in the error term is less than half the previous summand. (Specifically, this is the case when $y>7$; here we use Bertrand's postulate to bound the ratio between consecutive primes by~$2$.) Consequently, that sum is bounded above by a geometric series that sums to twice the first term: if $r$ is the smallest prime exceeding $y$ (so that $r\le2y$), then
\[
\sum_{\ell> y} \frac{\ell^2}{\prod_{p\le \ell}(p+1)} < 2 \frac{r^2}{\prod_{p\le r}(p+1)} < 8y^2 \exp\bigg( {-} \sum_{p\le y} \log p \bigg) \ll y^2 e^{-c_5y}
\]
for some positive constant $c_5$, by Chebyshev's lower bound $\sum_{p\le y} \log p \gg y$. Since $y^3 e^{-c_5y}$ is a bounded function for $y\ge1$, we see that $y^2 e^{-c_5y} \ll 1/y \ll 1/\log x$, which completes the proof of the lemma.
\end{proof}

We now collect the previous three lemmas together to assist in our evaluation of the main sum appearing in Corollary~\ref{cor:finalavg}.

\begin{prop}\label{prop:main sum}
Let $\Delta$ be defined as in equation~\eqref{eq:Delta def}. Then for $x\ge3$,
\[
\sum_{q\le x} \phi(q)\ell(q) = \frac{3\Delta x^2}{\pi^2} +O\bigg( \frac{x^2}{\log x} \bigg)
\]
\end{prop}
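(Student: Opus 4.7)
The plan is to decompose each $q\le x$ multiplicatively as $q = mn$, where $m$ is the largest $y$-friable divisor of $q$ (so that $\gcd(n,P)=1$), with $y = 2\log x$ and $P = \prod_{p\le y}p$. The point is that, for $m$ of controllable size, $\ell(q) = \ell(m)$; this factors the main sum into an outer sum over $m$ weighted by $\phi(m)\ell(m)$ and an inner sum over $n$ to which Lemma~\ref{lem:sumphi} directly applies.

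First I would discard the $q \le x$ whose $y$-friable part $m$ exceeds $\sqrt x$. By Lemma~\ref{lem:tenenbaum}(b) only $\ll x^{2/3}$ such $q$ exist, and each contributes $\phi(q)\ell(q) \le q \cdot O(\log q) \ll x\log x$, so the total is $O(x^{5/3}\log x)$, absorbed into the claimed error. For the surviving $q$ I would verify the identity $\ell(q) = \ell(m)$: Chebyshev's estimate gives $\prod_{p\le y} p = x^{2+o(1)}$, which exceeds $\sqrt x$ once $x$ is large, so no $m \le \sqrt x$ can be divisible by every prime $\le y$; hence $\ell(m) \le y$, and since $\ell(m) \mid P$ while $\gcd(n, P) = 1$, the prime $\ell(m)$ divides neither $m$ nor $n$, making it the least prime not dividing $q$.

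Then, using $\phi(q) = \phi(m)\phi(n)$ and Lemma~\ref{lem:sumphi} with modulus $P$ on the inner sum over $n$, the main contribution becomes
\[
\frac{3x^2}{\pi^2}\prod_{p\le y}\bigg(1+\frac{1}{p}\bigg)^{-1} \sum_{\substack{m\le\sqrt x \\ m\text{ is $y$-friable}}} \frac{\phi(m)\ell(m)}{m^2} = \frac{3\Delta x^2}{\pi^2} + O\bigg(\frac{x^2}{\log x}\bigg)
\]
by Lemma~\ref{lem:friable sum}, which is precisely the desired main term. The error contributed by Lemma~\ref{lem:sumphi} per $m$ is $O(2^{\omega(P)}(x/m)\log(x/m))$, and because $\omega(P) = \pi(2\log x)$ forces $2^{\omega(P)} = x^{o(1)}$, summing over the friable $m\le\sqrt x$ (using the crude bound $\phi(m)\ell(m)\le m\cdot y$) yields a total of $x^{3/2+o(1)}$, comfortably within the allowed error. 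The only mildly delicate step is the identity $\ell(q) = \ell(m)$, resting on Chebyshev's lower bound for $\prod_{p\le y} p$; everything else is routine bookkeeping given the inputs (Lemmas~\ref{lem:sumphi} and~\ref{lem:friable sum}) already established.
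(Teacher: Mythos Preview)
Your proposal is correct and follows essentially the same route as the paper: the same decomposition $q=mn$ with $m$ the largest $y$-friable divisor (with $y=2\log x$), the same application of Lemma~\ref{lem:sumphi} to the inner sum over $n$, the same handling of the tail $m>\sqrt{x}$ via Lemma~\ref{lem:tenenbaum}(b), and the same conclusion via Lemma~\ref{lem:friable sum}. The only differences are organizational---you discard the large-$m$ terms before establishing $\ell(q)=\ell(m)$, whereas the paper first notes $\ell(q)=\ell(M(q))$ holds for all $q\le x$ and treats the tail afterward---and your bookkeeping of the Lemma~\ref{lem:sumphi} error is marginally sharper ($x^{3/2+o(1)}$ versus the paper's $x^{11/6}$), but neither matters for the result.
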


\begin{proof}
As usual we may assume that $x$ is sufficiently large. We evaluate the sum by sorting the integers $q$ according to their divisors consisting only of small primes. Set $y=2\log{x}$, and define $Q=\prod_{p \leq y}p$. Let $M(q)$ denote the largest $y$-friable divisor of $q$; notice that $\ell(q)=\ell(M(q))$, since $\ell(q)<y$ for all $q\le x$ (when $x$ is sufficiently large).

For a fixed $y$-friable number $m$, the statement $M(q)=m$ is equivalent to $q$ being of the form $q'm$ with $\gcd(q',Q)=1$; note that in this case $\gcd(q',m)=1$ as well, and so $\phi(q)=\phi(q')\phi(m)$. Consequently, by Lemma~\ref{lem:sumphi},
\begin{align*}
\sum_{\substack{q \leq x\\ M(q)=m}} \phi(q) \ell(q) &= \phi(m)\ell(m) \sum_{\substack{q' \le x/m \\ \gcd(q', Q)=1}}\phi(q')\\
&= \phi(m)\ell(m) \frac{3(x/m)^2}{\pi^2} \prod_{p \leq y}\bigg(1+\frac{1}{p}\bigg)^{-1} + O\bigg(\phi(m)\ell(m) 2^{\pi(y)} \frac xm \log \frac xm \bigg).
\end{align*}
In the error term, note that $\phi(m)/m \le 1$ and $\ell(m)\log(x/m) \ll \log^2x$, and also that $2^{\pi(y)}\log^2x \ll \exp\big( O((\log x)/\log\log x) \big) \ll x^{1/3}$, say. We thus have
\[
\sum_{\substack{q \leq x\\ M(q)=m}} \phi(q) \ell(q) = \frac{3x^2}{\pi^2} \frac{\phi(m)\ell(m)}{m^2} \prod_{p \leq y}\bigg(1+\frac{1}{p}\bigg)^{-1} + O( x^{4/3} ).
\]

Summing over $m$, we conclude that
\begin{align*}
\sum_{q\le x} \phi(q)\ell(q) &= \sum_{\substack{m\le x \\ \text{$m$ is $y$-friable}}} \sum_{\substack{q \leq x\\ M(q)=m}} \phi(q) \ell(q) \\
&= \sum_{\substack{m\le\sqrt x \\ \text{$m$ is $y$-friable}}} \bigg( \frac{3x^2}{\pi^2} \frac{\phi(m)\ell(m)}{m^2} \prod_{p \leq y}\bigg(1+\frac{1}{p}\bigg)^{-1} + O( x^{4/3} ) \bigg) \\
&\qquad{}+ O\bigg( \sum_{\substack{\sqrt x < m \le x \\ \text{$m$ is $y$-friable}}} \phi(m)\ell(m) \bigg).
\end{align*}
In this last error term, each summand is at most $xy \ll x^{7/6}$, and the number of summands is $\ll x^{2/3}$ by Lemma~\ref{lem:tenenbaum}(b). Therefore this equation becomes
\[
\sum_{q\le x} \phi(q)\ell(q) = \frac{3x^2}{\pi^2} \prod_{p \leq y}\bigg(1+\frac{1}{p}\bigg)^{-1} \sum_{\substack{m\le\sqrt x \\ \text{$m$ is $y$-friable}}} \frac{\phi(m)\ell(m)}{m^2} + O(x^{11/6}),
\]
which by Lemma~\ref{lem:friable sum} becomes $\sum_{q\le x} \phi(q)\ell(q) = {3\Delta x^2/\pi^2} +O\big( {x^2/\log x} \big)$ as claimed.
\end{proof}

\begin{proof}[Proof of Corollary~\ref{cor:finalavg}]
We prove the corollary in the quantitative form
\[
\bigg( \sum_{q \leq x}\sum_{\substack{\chi \pmod{q}\\\chi\neq\chi_0}}1 \bigg)^{-1} \bigg( \sum_{q \leq x}\sum_{\substack{\chi \pmod{q}\\\chi\neq\chi_0}} n_{\chi} \bigg) = \Delta + O\bigg( \frac{(\log\log x)^2}{\log x} \bigg).
\]
Since the first double sum is well-known to equal
\[ \sum_{q \leq x}\sum_{\substack{\chi \pmod{q}\\\chi\neq\chi_0}}1 = \sum_{q \leq x}(\phi(q)-1) = \frac{3x^2}{\pi^2}  + O(x\log{x})\]
(the latter equality follows from Lemma \ref{lem:sumphi} with $m=1$), it suffices to show that
\begin{equation}\label{eq:numerator}
\sum_{q \leq x}\sum_{\substack{\chi \pmod{q}\\\chi\neq\chi_0}} n_{\chi} = \frac{3\Delta x^2}{\pi^2} + O\bigg( \frac{x^2(\log\log x)^2}{\log x} \bigg).
\end{equation}

By Theorem \ref{thm:main},
\begin{align*}
\sum_{q \leq x}\sum_{\substack{\chi \pmod{q}\\\chi\neq\chi_0}} n_{\chi} &= \sum_{3\le q \leq x}(\phi(q)-1)\bigg(\ell(q) + O\bigg(\frac{(\log\log{q})^2}{\log q}\bigg)\bigg) \\
&= \sum_{3\le q \leq x}\phi(q) \ell(q) + O\bigg(\sum_{3\le q \leq x}\bigg(\ell(q) +\phi(q) \frac{(\log\log{q})^2}{\log{q}}\bigg)\bigg).
\end{align*}
%% deleted the last line of the display, which seems to have been a repeat of the previous line
We note that $\ell(q) \ll \log x$ uniformly for $q \leq x$. Since the function $(\log\log t)^2/\log t$ is bounded for $t\ge3$ and decreasing for $t>e^{e^2}$, it follows that the error term (for $x$ sufficiently large) is
\[
\ll \sum_{q\le x} \log x + \sum_{q\le \sqrt x} q + \sum_{\sqrt x<q\le x} q \frac{(\log\log\sqrt x)^2}{\log\sqrt x},
\]
and so
\[ \sum_{q \leq x}\sum_{\substack{\chi \pmod{q}\\\chi\neq\chi_0}} n_{\chi}= \sum_{q \leq x}\phi(q) \ell(q) + O\bigg( \frac{x^2(\log\log x)^2}{\log x} \bigg). \]
Since $\sum_{q\le x} \phi(q)\ell(q) = {3\Delta x^2/\pi^2} +O\big( {x^2/\log x} \big)$ by Proposition~\ref{prop:main sum}, this equation establishes the formula~\eqref{eq:numerator} and hence the corollary.
\end{proof}

It is worth remarking that our methods can address the average value of $n_\chi$, not just over all nonprincipal characters $\chi$, but also over only certain characters. Let $q \ge 3$, and let $\X(q)$ be a nonempty collection of nonprincipal Dirichlet characters modulo~$q$. The set of $\chi \in \X(q)$ with $n_{\chi} > \ell(q)$ is obviously a subset of the set of all nonprincipal $\chi \pmod q$ with $n_{\chi}>\ell(q)$. This triviality, taken together with the estimates occurring in the proof of Theorem \ref{thm:main} and earlier in Section~\ref{main thm section}, shows that
\begin{equation}\label{eq:moregeneral}
\big( \#\X(q) \big)^{-1} \sum_{\chi \in \X(q)} n_{\chi} = \ell (q) + O\bigg(\frac{\phi(q)}{\#\X(q)}\frac{(\log\log q)^2}{\log q} \bigg).
\end{equation}

To take an example of special interest, let $\X(q)$ be the set of primitive characters modulo $q$, and let $\phi^*(q)=\#\X(q)$. From M\"{o}bius inversion applied to the relation $\sum_{d \mid q}\phi^*(d)=\phi(q)$, we find that
\[ \phi^*(q) = q \prod_{p \parallel q}\bigg(1-\frac{2}{p}\bigg) \prod_{p^2 \mid q}\bigg(1-\frac{1}{p}\bigg)^2. \]
Hence, $\phi^*(q) > 0$ precisely when $q \not\equiv 2\pmod{4}$, and whenever $\phi^*(q)$ is nonvanishing, we have
\[ \phi^*(q) \gg \phi(q) \prod_{p \mid q}\bigg(1-\frac{1}{p}\bigg) \gg \frac{\phi(q)}{\log\log q}. \]
So when $q \not\equiv 2\pmod{4}$, the estimate \eqref{eq:moregeneral} shows that the average of $n_{\chi}$ taken over primitive characters $\chi$ modulo $q$ is
\begin{equation}
\label{ell avg for primitive}
\frac1{\phi^*(q)} \sum_{\substack{\chi\pmod q \\ \chi\text{ primitive}}} n_\chi = \ell(q) + O\bigg( \frac{(\log\log q)^3}{\log q} \bigg),
\end{equation}
which is the analogue of Theorem~\ref{thm:main} for primitive characters. From this, we can deduce a corollary similar to Corollary~\ref{cor:finalavg}. It is necessary to replace Lemma~\ref{lem:sumphi} with the estimate
\begin{equation}\label{altsum}
\sum_{\substack{n \leq x \\\gcd(n,m)=1}} \phi^*(n) = \frac{18x^2}{\pi^4} \bigg(\prod_{p \mid m}\frac{p^3}{(p+1)(p^2-1)}\bigg) + O(2^{\omega(m)} x\log^2 x)
\end{equation}
uniformly in $m$, which can be proved by an argument similar to the proof of Lemma~\ref{lem:sumphi}. Imitating the proof of Corollary \ref{cor:finalavg} but using equations~\eqref{ell avg for primitive} and~\eqref{altsum} as input, we can establish:
% that as $x\to\infty$,
%\[ \frac{\sum_{1<q \leq x}\sideset{}{^*}\sum_{\chi} n_{\chi}}{\sum_{1<q \leq x}\sideset{}{^*}\sum_{\chi} 1} \to \sum_{\ell} \frac{\ell^4}{(\ell+1)(\ell^2-1)}\prod_{p < \ell}\frac{p^2-p-1}{(p+1)(p^2-1)},
%\]
%where $\sum^{*}$ indicates a sum over primitive characters modulo $q$. \textsc{Mathematica} evaluates the right-hand sum as
%\[ 2.15143510568614654862428100509658405326330457185845\ldots.\]

\begin{thm}\label{finalavg primitive} Define
\[
\Delta^*= \sum_{\ell} \frac{\ell^4}{(\ell+1)^2(\ell-1)}\prod_{p < \ell}\frac{p^2-p-1}{(p+1)^2(p-1)} \approx 2.1514351057,
\]
where the sum and product are taken over primes $\ell$ and~$p$. Then
\begin{equation*}
\lim_{x\to\infty} \bigg( \sum_{q \leq x} \sum_{\substack{\chi\pmod q \\ \chi\text{ primitive}}} 1 \bigg)^{-1} \bigg( \sum_{q \leq x} \sum_{\substack{\chi\pmod q \\ \chi\text{ primitive}}} n_{\chi} \bigg) = \Delta^*.
\end{equation*}
\end{thm}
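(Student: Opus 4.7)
The plan is to parallel the chain of arguments running from Proposition~\ref{prop:main sum} through the proof of Corollary~\ref{cor:finalavg}, substituting $\phi^{*}$ for $\phi$ throughout and using \eqref{ell avg for primitive} and \eqref{altsum} in place of Theorem~\ref{thm:main} and Lemma~\ref{lem:sumphi}. First I would note that $\phi^{*}(q)=0$ when $q\equiv 2\pmod 4$, so those moduli contribute nothing, and that applying \eqref{altsum} with $m=1$ gives the denominator $\sum_{q\le x}\phi^{*}(q)=18x^2/\pi^4+O(x\log^2 x)$. For the numerator, \eqref{ell avg for primitive} lets me replace the inner character sum by $\phi^{*}(q)\ell(q)+O(\phi^{*}(q)(\log\log q)^3/\log q)$. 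Since $\ell(q)\ll\log x$ for $q\le x$ and $\phi^{*}(q)\le q$, the same dyadic split used in the proof of Corollary~\ref{cor:finalavg} (splitting the $q$-range at $\sqrt x$) shows that this remainder contributes at most $O(x^2(\log\log x)^3/\log x)$, reducing the entire theorem to proving the primitive-character analogue of Proposition~\ref{prop:main sum}, namely
\[
\sum_{q\le x}\phi^{*}(q)\ell(q)=\frac{18\Delta^{*}x^2}{\pi^4}+O\bigg(\frac{x^2}{\log x}\bigg).
\]

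Next, I would prove this asymptotic by mimicking the friable-divisor sort of Proposition~\ref{prop:main sum}. Set $y=2\log x$, $Q=\prod_{p\le y}p$, and let $M(q)$ denote the largest $y$-friable divisor of $q$, so $\ell(q)=\ell(M(q))$. Writing $q=q'm$ with $m=M(q)$ and $\gcd(q',Q)=1$, the multiplicativity of $\phi^{*}$ gives $\phi^{*}(q)=\phi^{*}(q')\phi^{*}(m)$, and \eqref{altsum} evaluates the inner sum over $q'$. The contribution of $m>\sqrt x$ is controlled by Lemma~\ref{lem:tenenbaum}(b) exactly as before; the per-$m$ error coming from \eqref{altsum} now carries an extra $\log x$, but remains $O(x^{4/3})$ since $2^{\pi(y)}\log^3 x\ll x^{o(1)}$. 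One is left with the main term
\[
\frac{18x^2}{\pi^4}\prod_{p\le y}\frac{p^3}{(p+1)(p^2-1)}\sum_{\substack{m\le\sqrt x\\ m\text{ is }y\text{-friable}}}\frac{\phi^{*}(m)\ell(m)}{m^2},
\]
and what remains is the analogue of Lemma~\ref{lem:friable sum}: showing that this product-times-sum equals $\Delta^{*}+O(1/\log x)$.

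The heart of the proof is this last step. Grouping the friable sum by the value of $\ell(m)=\ell$, I would use $\phi^{*}(p)=p-2$ and $\phi^{*}(p^k)=p^{k-2}(p-1)^2$ for $k\ge 2$ to evaluate the local factor
\[
\sum_{k\ge 1}\frac{\phi^{*}(p^k)}{p^{2k}}=\frac{p-2}{p^2}+\frac{p-1}{p^3}=\frac{p^2-p-1}{p^3}.
\]
Primes $p<\ell$ must divide $m$ and so contribute $(p^2-p-1)/p^3$; the prime $\ell$ cannot divide $m$ and contributes $1$; and primes $\ell<p\le y$ are free and contribute $1+(p^2-p-1)/p^3=(p+1)^2(p-1)/p^3$. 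Multiplying by the prefactor $\prod_{p\le y}p^3/((p+1)^2(p-1))$ causes the factors at primes above $\ell$ to telescope to $1$, leaving exactly the summand of $\Delta^{*}$ (including the overall multiplier $\ell$) for each $\ell\le y$. The tail $\ell>y$ is negligible because each summand is of order $\ell\exp(-c\ell)$ by Chebyshev's bound on $\sum_{p\le\ell}\log p$, mirroring the corresponding tail estimate in Lemma~\ref{lem:friable sum}. The one place requiring genuine care is precisely this Euler-product telescoping — verifying that the three cases $p<\ell$, $p=\ell$, $\ell<p\le y$ combine with the prefactor to produce the exact form of the summand in $\Delta^{*}$; apart from that the proof is a mechanical rerun of Section~3 with $\phi^{*}$ replacing $\phi$.
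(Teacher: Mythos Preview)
Your proposal is correct and follows essentially the same approach as the paper, which simply instructs the reader to imitate the proof of Corollary~\ref{cor:finalavg} with the inputs \eqref{ell avg for primitive} and \eqref{altsum} in place of Theorem~\ref{thm:main} and Lemma~\ref{lem:sumphi}. You have in fact gone further than the paper by carrying out the Euler-product verification explicitly, correctly checking that the local factors $\sum_{k\ge1}\phi^{*}(p^k)/p^{2k}=(p^2-p-1)/p^3$ combine with the prefactor from \eqref{altsum} to yield exactly the summand of $\Delta^{*}$.
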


\section{The least non-split prime averaged over cubic number fields}

In this section, we consider cubic extensions $K$ of $\Q$ and investigate the distribution of the least rational prime with a particular factorization 
into prime ideas in the ring of integers $O_K$. This investigation is most successful in the case of the least prime that does not split completely, where we will be able to establish Theorem~\ref{thm:cubic}. In other cases (the least completely split prime, the least partially split prime, and the least inert prime), we will be able to establish the analogous Theorem~\ref{thm:generalsplit}, but only under the assumption of a generalized Riemann hypothesis.

The first ingredient of the proof of Theorem \ref{thm:cubic} is a now classical theorem of Davenport and Heilbronn \cite{DH71}:

\begin{prop}\label{prop:DH} As $x\to\infty$, the number of cubic fields $K$ with $|D_K| \leq x$ is $\sim \displaystyle\frac{x}{3\zeta(3)}$.\end{prop}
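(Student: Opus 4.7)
The plan is to parametrize cubic orders by integral binary cubic forms via the Delone--Faddeev correspondence, count $\mathrm{GL}_2(\Z)$-orbits on these forms through a geometry-of-numbers argument in a fundamental domain, and then sieve down to those orbits that correspond to maximal orders (equivalently, the full rings of integers of cubic number fields).

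The Delone--Faddeev correspondence gives a discriminant-preserving bijection between isomorphism classes of cubic rings $R$ (rank-$3$ $\Z$-algebras) and $\mathrm{GL}_2(\Z)$-orbits on the $4$-dimensional lattice of integral binary cubic forms $f(x,y)=ax^3+bx^2y+cxy^2+dy^3$. Under this bijection, cubic number fields $K$ (up to $\Q$-isomorphism of $K$) match irreducible maximal forms, that is, forms with $f$ irreducible over $\Q$ and with $R_f$ equal to the full ring of integers of some cubic field. So the counting problem becomes: how many $\mathrm{GL}_2(\Z)$-orbits of irreducible maximal integral binary cubic forms have $|\mathrm{disc}(f)|\le x$?

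The next step is Davenport's count of \emph{all} $\mathrm{GL}_2(\Z)$-orbits of integral binary cubic forms in the region $|\mathrm{disc}(f)|\le x$. One fixes a fundamental domain $\mathcal{F}$ for the $\mathrm{GL}_2(\Z)$-action on the cone of nondegenerate real forms and counts lattice points in $\mathcal{F}\cap\{|\mathrm{disc}|\le x\}$; this produces a main term $\sim c_\infty x$ for an explicit archimedean constant $c_\infty$ computed as a volume integral, with a separate contribution from each discriminant sign. The central technical difficulty is the cusp of $\mathcal{F}$, which extends to infinity along the reducible locus: following Davenport's earlier work on binary cubic forms, one shows that the lattice points deep in the cusp are almost all reducible, and that the irreducible ones there number only $O(x^{1-\delta})$ and so are negligible.

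Finally, I would sieve for maximality. The condition ``$R_f$ is maximal at the prime $p$'' translates into a congruence condition on $f$ modulo $p^2$; a direct local computation shows the $p$-adic density of nonmaximal forms is $O(1/p^2)$. An inclusion--exclusion over primes, combined with a uniform upper bound on the number of orbits failing maximality at $p$ valid for $p$ up to a small power of $x$, multiplies $c_\infty$ by the Euler product $\prod_p(1-\lambda_p)$; these constants collapse to the stated value $1/(3\zeta(3))$. I expect the two main obstacles to be the cusp analysis in Davenport's counting step and the uniformity in $p$ needed to perform the maximality sieve over all primes rather than just those bounded by a fixed quantity; both were resolved in \cite{DH71} and my plan would follow that route.
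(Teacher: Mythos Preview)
Your outline is a faithful sketch of the original Davenport--Heilbronn argument, and there is no mathematical gap in it at the level of a plan. However, the paper does not give its own proof of this proposition at all: it is stated as ``a now classical theorem of Davenport and Heilbronn~\cite{DH71}'' and simply quoted. Later, in the proof of Theorem~\ref{thm:cubic}, the authors remark that taking $y=1$ in the Taniguchi--Thorne estimate (Proposition~\ref{prop:TT}) recovers a quantitative form $x/(3\zeta(3))+O(x^{5/6})$, but Proposition~\ref{prop:TT} is itself only cited, not proved. So the ``paper's proof'' is a citation, and your proposal is essentially a summary of what that citation contains; there is nothing further to compare.
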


A refined version of the Davenport--Heilbronn theorem was proposed by Roberts \cite{roberts01} and recently confirmed by Taniguchi and Thorne~\cite{TT11} (see also the independent work of Bhargava, Shankar, and Tsimerman~\cite{BST10}). One consequence of their work (see \cite[Theorem 1.3 and Section 6.2]{TT11}) is the following explicit estimate:

\begin{prop}\label{prop:TT} Let $x \geq 1$, and let $1\le y \le \frac{1}{40}\log{x}$. For each prime $p \leq y$, we define local factors $t(p)$ and $t'(p)$, depending on the desired factorization of $p$, as follows:
\[ t(p) =
\begin{cases}
1/6 &\text{if $p$ is to split completely}, \\
1/2 &\text{if $p$ is to partially split}, \\
1/3 &\text{if $p$ is to be inert}, \\
1/p &\text{if $p$ is to partially ramify},\\
1/p^2 &\text{if $p$ is to ramify completely},
\end{cases}
\]
and
\[ t'(p) = \frac{t_p}{1+1/p + 1/p^2}. \]
The number of cubic number fields $K$ (up to isomorphism) with $|D_K| \leq x$ in which the primes $p\le y$ factor in the specified ways is
\[ \frac{x}{3\zeta(3)} \prod_{p \leq y}t'(p) + O(x^{5/6}), \]
uniformly in the choice of splitting conditions.
\end{prop}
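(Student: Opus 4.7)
The plan is to derive this as a sieved count of $\mathrm{GL}_2(\Z)$-equivalence classes of integral binary cubic forms with a power-saving error, following the approach of Taniguchi and Thorne. Via the Delone--Faddeev correspondence, nondegenerate integral binary cubic forms up to $\mathrm{GL}_2(\Z)$-equivalence biject with isomorphism classes of cubic rings, preserving discriminant; cubic fields $K$ correspond to those forms $F$ whose associated cubic ring is a maximal order in a cubic field. Both maximality at a prime $p$ and the factorization type of $p$ in $K$ are determined by the $\mathrm{GL}_2(\Z_p)$-orbit of $F$, so the prescribed splitting behaviors of the primes $p\le y$ become congruence conditions on $F$ modulo $M:=\prod_{p\le y} p^{a_p}$ with each $a_p\le 2$. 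This reduces the problem to counting $\mathrm{GL}_2(\Z)$-orbits of bounded discriminant satisfying this composite congruence.

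To carry out the count, I would invoke the Shintani zeta function attached to the prehomogeneous vector space of binary cubic forms, twisted to enforce the maximality and congruence conditions. The key analytic inputs are: (i) meromorphic continuation to $\mathrm{Re}(s)>\tfrac{5}{6}$ with simple poles at $s=1$ and $s=\tfrac{5}{6}$; (ii) a polynomial-in-$M$ bound on the twisted zeta function in vertical strips; and (iii) identification of the residue at $s=1$ as an Euler product whose factors at $p\le y$ equal $t'(p)$. A Perron-type contour integral, shifted past the secondary pole at $s=\tfrac{5}{6}$, then yields
\[
\#\{K \colon |D_K|\le x,\ \text{splitting as specified}\} = \frac{x}{3\zeta(3)}\prod_{p\le y} t'(p) + O\!\bigl(M^{c}\, x^{5/6}\bigr)
\]
for an absolute constant $c$. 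The local mass $1+1/p+1/p^{2}$ in the denominator of $t'(p)=t(p)/(1+1/p+1/p^{2})$ appears naturally as the total mass of cubic \'etale $\Q_p$-algebras, relative to which $t(p)$ records the $p$-adic probability of the chosen splitting type.

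The main obstacle is uniformity: the $O(x^{5/6})$ error must absorb the sieve modulus $M$ for every choice of splitting conditions at the primes $p\le y$. This requires making the $M$-dependence in the zeta-function error explicit and verifying that it grows only polynomially. The hypothesis $y\le\tfrac{1}{40}\log x$ is calibrated precisely for this purpose: by Chebyshev, $\log M \le 2\sum_{p\le y}\log p \ll y \le \tfrac{1}{40}\log x$, so $M^{c}\le x^{\varepsilon}$ for an arbitrarily small $\varepsilon>0$ once the numerical constant $\tfrac{1}{40}$ is small enough relative to $c$. The resulting composite error remains $O(x^{5/6})$, uniformly in the choice of splitting data, which is the content of the proposition; a fully rigorous verification along these lines is what Theorem~1.3 and Section~6.2 of Taniguchi--Thorne provide.
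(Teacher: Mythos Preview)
The paper does not supply its own proof of this proposition: it is stated as a consequence of Taniguchi--Thorne (Theorem~1.3 and \S6.2 of \cite{TT11}) and simply quoted for later use. Your sketch correctly identifies the ingredients of that argument (Delone--Faddeev, Shintani zeta functions, polynomial control of the twisted zeta function in the sieve modulus, and the Chebyshev bound that makes $y\le\tfrac1{40}\log x$ absorb the $M^c$ loss into $O(x^{5/6})$), and you yourself note at the end that the rigorous details are precisely what \cite{TT11} provides. So there is nothing to compare: your proposal is a faithful outline of the cited proof, while the paper itself merely invokes the result without reproving it.
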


If $K$ is a non-Galois cubic field,
%%% with non-cyclic Galois group, 
then the normal closure of $K/\Q$ contains a unique quadratic subfield, called the \emph{quadratic resolvent} of $K$. If $K$ is cyclic, we adopt the convention that $K$ has quadratic resolvent $\Q$; in both cases, the quadratic resolvent is $\Q(\sqrt{D_K})$. The next lemma provides an upper bound on the number of cubic fields with a given quadratic resolvent. It should be noted that Cohen and Morra \cite{CM11} have asymptotic results for this problem for a \emph{fixed} quadratic resolvent, but in our application we require some uniformity.

\begin{lem}\label{lem:EV} Let $L$ be either $\Q$ or a quadratic extension of $\Q$. For $x\ge1$, the number of cubic fields whose discriminant is at most $x$ in absolute value, and whose quadratic resolvent equals $L$, is $\ll x^{0.84}$ uniformly in $L$.
\end{lem}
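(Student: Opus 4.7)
The plan is to separate the case $L=\Q$ (cyclic cubic fields) from the case where $L$ is a quadratic field (non-Galois cubics). When $L=\Q$, class field theory parametrizes $K$ by its conductor $f$, with $|D_K|=f^2$ and at most $2^{\omega(f)}$ cyclic cubic fields of each conductor; summing over $f\le x^{1/2}$ gives a total of $\ll x^{1/2}(\log x)^{O(1)}$, comfortably below $x^{0.84}$.

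For the case $L$ quadratic, I would use that each non-Galois cubic $K$ with resolvent $L$ sits inside a unique $S_3$-extension $\tilde K/\Q$ containing $L$, and that the map $K\mapsto \tilde K$ is $3$-to-$1$ (the three cubic subfields of $\tilde K$ are conjugate). The extension $\tilde K/L$ is cyclic of degree three, and combining the conductor--discriminant formula for the $S_3$-extension $\tilde K/\Q$ (which gives $|D_{\tilde K}|=|D_L|\cdot|D_K|^2$) with the tower formula $|D_{\tilde K}|=|D_L|^{3}N_{L/\Q}(\mathfrak f)^2$, where $\mathfrak f\subseteq\mathcal O_L$ is the conductor of $\tilde K/L$, yields the clean identity $N_{L/\Q}(\mathfrak f)=|D_K|/|D_L|$. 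Consequently $|D_K|\le x$ translates to $N(\mathfrak f)\le Y:=x/|D_L|$.

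By class field theory, cyclic cubic extensions of $L$ with conductor dividing $\mathfrak f$ correspond to index-$3$ subgroups of the ray class group $\mathrm{Cl}_L(\mathfrak f)$, so the number of non-Galois cubic $K$ with resolvent $L$ and $|D_K|\le x$ is at most $3\sum_{N(\mathfrak f)\le Y}|\mathrm{Cl}_L(\mathfrak f)[3]|$. I would then apply the standard exact sequence $(\mathcal O_L/\mathfrak f)^{\times}\to \mathrm{Cl}_L(\mathfrak f)\to \mathrm{Cl}_L\to 0$ to obtain $|\mathrm{Cl}_L(\mathfrak f)[3]|\ll 3^{\omega(\mathfrak f)}\,|\mathrm{Cl}_L[3]|$ (where $\omega(\mathfrak f)$ counts distinct prime ideal divisors), together with the Ellenberg--Venkatesh bound $|\mathrm{Cl}_L[3]|\ll |D_L|^{1/3+\epsilon}$ and the elementary estimate $\sum_{N(\mathfrak f)\le Y}3^{\omega(\mathfrak f)}\ll Y(\log Y)^{O(1)}$. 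This produces a bound of the shape $x\,|D_L|^{-2/3+\epsilon}(\log x)^{O(1)}$, already $\ll x^{0.84}$ once $|D_L|\gg x^{0.24}$.

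In the complementary range $|D_L|\ll x^{0.24}$ one effectively has ``$L$ fixed,'' and a finer bound is needed. Here I would adapt Ellenberg--Venkatesh's pigeonhole argument directly to the ray class group, viewing $\mathrm{Cl}_L(\mathfrak f)$ as the class group of an order of discriminant of size $\asymp|D_L|N(\mathfrak f)^2$, to obtain $|\mathrm{Cl}_L(\mathfrak f)[3]|\ll (|D_L|N(\mathfrak f)^2)^{1/3+\epsilon}$; summing against the ideal count $\sum_{N(\mathfrak f)\le Y}N(\mathfrak f)^{2/3+\epsilon}\ll Y^{5/3+\epsilon}$ and optimizing against the bound of the previous paragraph across ranges of $|D_L|$ yields the desired uniform estimate. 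The main obstacle is precisely this optimization: balancing the Ellenberg--Venkatesh bound (sharpest when the governing discriminant is large) against the elementary conductor count (sharpest when $Y$ is small) so that the final exponent stays below $0.84$ uniformly in $L$.
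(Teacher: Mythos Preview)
Your approach is considerably more elaborate than the paper's and, as written, is incomplete exactly where you flag it: the small-$|D_L|$ regime. The paper's proof avoids the class field theory entirely. It simply observes that $D_K = f^2 D_L$ for some positive integer $f$, so $f\le x^{1/2}$ and hence there are at most $x^{1/2}$ possible values of $D_K$; then it invokes the Ellenberg--Venkatesh bound on the number of cubic fields with a \emph{given} discriminant, namely $\ll_\epsilon |D|^{1/3+\epsilon}$, and sums to get $\ll x^{1/2}\cdot x^{1/3+\epsilon}\ll x^{0.84}$. Two lines, uniform in $L$, done.

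Your second argument does not close the gap. Summing $(|D_L|N(\mathfrak f)^2)^{1/3+\epsilon}$ over all ideals with $N(\mathfrak f)\le Y=x/|D_L|$ yields $|D_L|^{1/3+\epsilon}Y^{5/3+\epsilon}=x^{5/3+\epsilon}|D_L|^{-4/3}$, which is useless for bounded $|D_L|$. The point you are missing is that the conductors $\mathfrak f$ that actually occur satisfy $N(\mathfrak f)=f^2$ for an integer $f$ (this is your own identity $N(\mathfrak f)=|D_K|/|D_L|$ combined with $D_K=f^2D_L$), so one should sum over integers $f\le (x/|D_L|)^{1/2}$ rather than over all ideals of norm $\le Y$. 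Doing so gives
\[
\sum_{f\le (x/|D_L|)^{1/2}} (f^2|D_L|)^{1/3+\epsilon}\ \ll\ x^{5/6+\epsilon}|D_L|^{-1/2}\ \ll\ x^{0.84},
\]
which is precisely the paper's bound --- and indeed is the paper's argument, since $(f^2|D_L|)^{1/3+\epsilon}=|D_K|^{1/3+\epsilon}$ is just the Ellenberg--Venkatesh count of cubics with discriminant $D_K$. In other words, once one carries out your plan correctly, the class-field-theoretic scaffolding (ray class groups, the exact sequence, viewing $\mathrm{Cl}_L(\mathfrak f)$ as the class group of an order) is unnecessary: the whole proof reduces to ``count possible $D_K$, then count cubics per $D_K$.'' Your first argument for large $|D_L|$ is correct but also superfluous.
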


\begin{proof} Suppose that $K$ has quadratic resolvent $L$ and that $|D_K| \leq x$. Then $D_K = f^2 D_L$ for some positive integer $f$ (see for example \cite[\S6.4.5]{cohen93}).
%% Added a reference to a book of Cohen here. Cohen's book summarizes a bunch of facts without proof and refers the reader to Hasse. So possibly (?) we should just cite Hasse; but I feel that Cohen's book will be more accessible.
Clearly $f \leq x^{1/2}$, and thus the number of choices for $D_K$ is also at most $x^{1/2}$. Ellenberg and Venkatesh \cite{EV07} have shown that the number of cubic fields with discriminant $D$ is $\ll_{\epsilon} |D|^{1/3+\epsilon}$, and so the lemma follows upon taking $\epsilon = \frac1{150}$ and summing over the $x^{1/2}$ possibilities for $D=D_K$.
\end{proof}

\begin{rmk}
The proof actually gives an upper bound of $\ll_{\epsilon} x^{5/6+\epsilon}/|D_L|^{1/2}$ for the number of such cubic fields, although we will not need this stronger statement.
\end{rmk}

Recall now that $n_K$ denote the least rational prime that does not split completely in~$K$; we are interested in the average value of $n_K$ as $K$ ranges over all cubic fields, ordered by the absolute value of their conductor. To help us handle the contribution to the average from fields $K$ for which $n_K$ is large, we quote two results from the literature. The first, a universal upper bound on $n_K$, is due to Li \cite{li11}:

\begin{prop}\label{prop:li} If $K$ is a cubic field, then the least non-split prime in $K$ is $\ll |D_K|^{1/7.39}$.
\end{prop}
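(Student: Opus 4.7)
My plan is to reduce the bound on $n_K$ to an effective Chebotarev-type estimate for the Galois closure of $K/\Q$. Let $\tilde K$ denote this Galois closure and $G=\mathrm{Gal}(\tilde K/\Q)$; a rational prime $p$ is completely split in $K$ if and only if its Frobenius conjugacy class in $G$ is trivial, so $n_K$ is the least prime with non-trivial Frobenius in $\tilde K/\Q$. I would argue by contradiction: assume every unramified rational prime $p\le x$ splits completely in $K$, and extract from this a bound $x\ll|D_K|^{1/7.39}$.

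The key analytic input is the factorization $\zeta_K(s)=\zeta(s)L(s,\rho)$, where $\rho$ denotes the $2$-dimensional complement of the trivial representation inside $\mathrm{Ind}_{\mathrm{Gal}(\tilde K/K)}^{G}\mathbf{1}$. The unramified Dirichlet coefficient of $L(s,\rho)$ at $p$ equals $2$, $0$, or $-1$ according as $p$ is completely split, partially split, or inert in $K$. When $K$ is non-Galois we have $G=S_3$ and $\rho$ is induced from a non-trivial cubic character of the quadratic resolvent, so $L(s,\rho)$ is a Hecke $L$-function over $\Q(\sqrt{D_K})$ with analytic conductor $\asymp|D_K|$; when $K$ is cyclic, $L(s,\rho)$ factors as a product of two cubic Dirichlet $L$-functions of comparable conductor. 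In either case $L(s,\rho)$ is entire.

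The standing hypothesis forces the partial sum $\sum_{p\le x}a_p(\rho)\log p$ to be $2\psi(x)+O(\log^2|D_K|)\sim 2x$, while the explicit formula, together with the absence of a pole of $L(s,\rho)$ at $s=1$, expresses this same quantity as essentially $-\sum_{\rho_0}x^{\rho_0}/\rho_0$ summed over the non-trivial zeros $\rho_0$. For the two expressions to agree, $L(s,\rho)$ must either have many zeros extremely close to $s=1$, or a single real zero extremely close to $1$. The first possibility is ruled out by a log-free zero-density estimate for Hecke $L$-functions (in the spirit of Jutila's Proposition~\ref{prop:jutila}, but for the relevant Hecke $L$-function over the quadratic resolvent); quantifying this constraint yields $x\ll|D_K|^c$ for an explicit $c$, and optimizing the parameters should produce the specific exponent $1/7.39$.

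The main obstacle is the second possibility: a Siegel-type exceptional real zero of $L(s,\rho)$ near $s=1$. Such a zero would, on its own, account for the discrepancy in the explicit formula and a priori wreck the argument. Handling it requires a Deuring–Heilbronn-type zero-repulsion phenomenon, which pushes the remaining zeros further from $s=1$ and so salvages a usable quantitative conclusion. It is precisely the delicate balancing between the two scenarios — the bulk of zeros close to the line versus an isolated exceptional zero — that I expect to be the technical heart of the proof and to be responsible for the specific constant $1/7.39$ rather than a cleaner exponent.
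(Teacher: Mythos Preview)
The paper does not prove this proposition at all: it is quoted verbatim as a result of Li~\cite{li11}, with the further remark that the easier exponent $1/(4\sqrt{\e})+\epsilon$ would already suffice for the paper's purposes. So there is no ``paper's proof'' to compare against.

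Your sketch is a reasonable high-level outline of the standard method for such bounds, going back to Lagarias--Montgomery--Odlyzko and refined by Li: factor $\zeta_K(s)=\zeta(s)L(s,\rho)$ with $\rho$ the standard $2$-dimensional representation, use the explicit formula to detect a contradiction if all small primes split completely, control the bulk of the zeros by a log-free zero-density estimate, and handle a possible exceptional real zero via Deuring--Heilbronn repulsion. That is indeed the shape of Li's argument, and your identification of the exceptional-zero scenario as the bottleneck responsible for the specific exponent $1/7.39$ is on point. Note, however, that the zero-density input you need is not Jutila's Proposition~\ref{prop:jutila} (which concerns Dirichlet $L$-functions to a fixed modulus) but rather a log-free density estimate for the single Artin/Hecke $L$-function $L(s,\rho)$; and the explicit-formula balancing you allude to requires substantial quantitative work, not just an outline, to pin down any particular numerical exponent. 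As written, your proposal is a correct road map rather than a proof.
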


\noindent As discussed by Li, it is much simpler to prove Proposition \ref{prop:li} with the larger exponent ${1}/{4\sqrt{\e}} + \epsilon$, which would also suffice for our purposes.

The next result, in slightly stronger form, appears without proof in a paper of Duke and Kowalski~\cite{DK00} (for details, see the proof of~\cite[Lemma 5.3]{pollack11}). Baier~\cite{baier06} has a sharper result allowing one to replace $2/A$ below with $1/(A-1)$, but we will not need this improvement.

\begin{prop}\label{prop:charlem} Fix $A> 2$ and $\epsilon>0$. For any $x\ge1$, the number of primitive Dirichlet characters $\chi$ of conductor at most $x$ with the property that $\chi(p)=1$ for all primes $p \leq (\log{x})^A$ is $\ll_{A,\epsilon} x^{2/A + \epsilon}$.
\end{prop}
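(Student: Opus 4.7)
The plan is to apply the large sieve inequality for primitive characters, with test coefficients supported on friable integers, and then control the outcome using standard estimates for $\Psi(N,y)$. Set $y=(\log x)^A$. Because the hypothesis $\chi(p)=1$ in particular rules out $\chi(p)=0$, every character $\chi$ being counted must have conductor $q$ coprime to every prime at most $y$. By complete multiplicativity of $\chi$ on integers coprime to $q$, it then follows that $\chi(n)=1$ for \emph{every} positive $y$-friable integer $n$.

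Next, I would choose $N=x^2$ and set $a_n=1$ when $n\le N$ is $y$-friable and $a_n=0$ otherwise. The standard large sieve inequality for primitive characters asserts that
\[
\sum_{q\le x}\sum_{\substack{\chi\pmod q\\ \chi\text{ primitive}}}\bigg|\sum_{n\le N}a_n\chi(n)\bigg|^{2}\le (N+x^2)\sum_{n\le N}a_n^2=2x^2\,\Psi(x^2,y).
\]
For every $\chi$ in the exceptional family we are counting, the inner character sum equals $\Psi(x^2,y)$ exactly, so the number of such characters is at most $2x^2/\Psi(x^2,y)$.

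All that then remains is a lower bound of the form $\Psi(x^2,(\log x)^A)\ge x^{2-2/A-o(1)}$. With $u=\log(x^2)/\log y=2\log x/(A\log\log x)$, we have $u\to\infty$ while $u$ stays well below any positive power of $y$, so the de Bruijn--Hildebrand asymptotic (see, e.g., \cite[Chapter III.5]{tenenbaum95}) yields $\Psi(x^2,y)=x^2\exp(-u\log u\,(1+o(1)))$. A short computation gives $\log u=\log\log x\,(1+o(1))$ and hence $u\log u=(2/A)\log x\,(1+o(1))$, whence $\Psi(x^2,y)=x^{2-2/A+o(1)}$ and the number of exceptional characters is $\ll x^{2/A+o(1)}\ll_{A,\epsilon}x^{2/A+\epsilon}$ as claimed.

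The only place requiring any real care is the smooth-number asymptotic used at the end; however, since $u$ grows only like $\log x/\log\log x$ while $y$ is a power of $\log x$, the parameter $u$ lies comfortably within the range in which the de Bruijn estimate is valid and sharp enough. An alternative that avoids the asymptotic entirely is the crude bound $\Psi(N,y)\ge N\rho(u)$ combined with $\rho(u)\ge u^{-u}$, which still delivers a bound of the correct shape $\ll_{A,\epsilon}x^{2/A+\epsilon}$.
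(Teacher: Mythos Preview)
Your main argument is correct and is precisely the standard one: the paper itself does not prove this proposition but cites Duke--Kowalski~\cite{DK00} and \cite[Lemma~5.3]{pollack11}, where exactly this large-sieve-with-friable-coefficients approach is used. So there is nothing to compare.

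Two small inaccuracies in your side remarks, neither of which affects the proof: First, $u=2\log x/(A\log\log x)$ is \emph{not} ``well below any positive power of $y$''; in fact $u$ is of order $y^{1/A}/\log y$. What matters is rather that $y=(\log x^2)^{A}$ with $A>1$, which already places you in the Canfield--Erd\H{o}s--Pomerance range where $\Psi(x^2,y)=x^2 u^{-u(1+o(1))}$ holds, and your computation of $u\log u=(2/A)\log x\,(1+o(1))$ is fine. Second, the ``alternative'' inequality $\rho(u)\ge u^{-u}$ is false for large $u$ (the sharper asymptotic for $\rho$ has an extra factor $e^{-u\log\log u+O(u)}$), and $\Psi(N,y)\ge N\rho(u)$ is not a known universal inequality either; if you want a genuinely elementary lower bound, count products $\prod_{p\le y}p^{a_p}$ with bounded exponents instead.
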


\noindent This proposition quickly implies a lemma about the scarcity of {\em quadratic} fields with no small non-split primes.

\begin{lem}\label{lem:few quadratic fields}
The number of quadratic fields whose discriminant is at most $x$ in absolute value, in which every prime $p \le (\log x)^{200}$ splits, is $\ll x^{1/99}$.
\end{lem}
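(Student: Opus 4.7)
The plan is to recast the claim as a statement about primitive real Dirichlet characters and then apply Proposition~\ref{prop:charlem} directly. Each quadratic field corresponds bijectively to a fundamental discriminant $D$, and the Kronecker symbol $\chi_D(\cdot) = \leg{D}{\cdot}$ is a primitive real Dirichlet character modulo $|D|$. A rational prime $p$ splits completely in $\Q(\sqrt{D})$ if and only if $\chi_D(p)=1$; inert primes satisfy $\chi_D(p)=-1$, and ramified primes satisfy $\chi_D(p)=0$. So the hypothesis that every prime $p \le (\log x)^{200}$ splits in the field cut out by $D$ forces $\chi_D(p) = 1$ for all such $p$.

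Second, I would count the distinct characters $\chi_D$ that can arise from such discriminants. Since $D \mapsto \chi_D$ sends distinct fundamental discriminants to distinct primitive characters, the number of quadratic fields $\Q(\sqrt{D})$ with $|D_K| \le x$ satisfying the splitting condition is bounded above by the number of primitive characters $\chi$ of conductor at most $x$ with $\chi(p)=1$ for every prime $p \le (\log x)^{200}$.

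Finally, I would apply Proposition~\ref{prop:charlem} with $A = 200$ and a small $\epsilon$. The proposition gives a bound of $\ll_\epsilon x^{2/200+\epsilon} = x^{1/100+\epsilon}$. Choosing $\epsilon < 1/9900$ makes $1/100 + \epsilon < 1/99$, so the count is $\ll x^{1/99}$, as required. There is essentially no obstacle here: the lemma is a direct specialization of Proposition~\ref{prop:charlem} once one translates the splitting hypothesis into the character-value hypothesis via the Kronecker symbol.
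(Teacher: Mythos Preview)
Your proof is correct and follows essentially the same route as the paper's own argument: translate the splitting condition into $\chi_D(p)=1$ for the primitive quadratic character $\chi_D=\leg{D}{\cdot}$, then invoke Proposition~\ref{prop:charlem} with $A=200$ and $\epsilon$ small enough that $1/100+\epsilon\le1/99$ (the paper takes $\epsilon=1/9900$).
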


\begin{proof} There is a bijection between quadratic fields and primitive quadratic characters given by matching the quadratic field of discriminant $D$ with the primitive character $\leg{D}{\cdot}$ of conductor $|D|$ (see for example \cite[Theorem 9.13, p.\ 297]{MV07}). If $L$ is a quad\-rat\-ic field with the properties in the statement of the lemma, then $\chi(\cdot)=\leg{D_L}{\cdot}$ is a primitive character of conductor $|D_L| \le x$
with $\chi(p)=1$ for all $p \le (\log x)^{200}$. By Proposition~\ref{prop:charlem} with $\epsilon = \frac1{9900}$, the number of such characters is $\ll x^{1/99}$. 
%% edited this proof to answer your questions
\end{proof}

These preliminary field-counting results enable us to bound an error term that arises in the proof of Theorem~\ref{thm:cubic}.

\begin{lem}\label{lem:error terms field count}
Let $x\ge3$, and let $y=\frac1{40}\log x$ and $z=(\log x)^{200}$. There exists an absolute positive constant~$c_6$ such that
\[
z \sum_{\substack{|D_K| \le x \\ y<n_K}} 1 + x^{1/7.39} \sum_{\substack{|D_K| \le x \\ z<n_K}} 1 \ll x \exp\bigg( {-}\frac{c_6\log{x}}{\log\log{x}}\bigg),
\]
where the sums range over cubic fields $K$.
\end{lem}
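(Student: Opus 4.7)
The plan is to bound the two sums separately. For the first, I will apply Proposition~\ref{prop:TT} with the splitting condition requiring every prime $p\le y$ to split completely; this is legitimate since $y=\frac1{40}\log x$. Using $t'(p)\le 1/6$ in that case, the count of cubic fields with $|D_K|\le x$ and $n_K>y$ is $\ll x\cdot 6^{-\pi(y)} + x^{5/6}$. By the prime number theorem, $\pi(y)\gg (\log x)/(\log\log x)$, so $6^{-\pi(y)}$ is already of the form $\exp(-c\log x/\log\log x)$. Multiplying by $z=(\log x)^{200}$ costs only a factor $\exp(200\log\log x)$, which is absorbed by a slight decrease of the constant in the exponent, and the negligible piece $z\cdot x^{5/6}$ is easily of the required size.

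For the second sum, the strategy is to pass to the quadratic resolvent. If $n_K>z$ and $K$ is non-Galois, then every prime $p\le z$ splits completely in $K$; in particular such a $p$ is unramified in $K$, and since $D_L\mid D_K$ it is unramified in the resolvent $L=\Q(\sqrt{D_K})$ as well, hence unramified in the Galois closure $\widetilde K$. The Frobenius at $p$ in $\mathrm{Gal}(\widetilde K/\Q)\cong S_3$ therefore has order equal to the residue degree of $p$ in $K$, which is $1$, so Frobenius is trivial and $p$ splits completely in every subfield, including $L$. Now Lemma~\ref{lem:few quadratic fields} (applied with $|D_L|\le|D_K|\le x$) gives $\ll x^{1/99}$ choices for $L$, while Lemma~\ref{lem:EV} gives $\ll x^{0.84}$ cubic fields with any prescribed resolvent; the cyclic case $L=\Q$ is likewise handled directly by Lemma~\ref{lem:EV}. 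Altogether the number of cubic fields with $n_K>z$ is $\ll x^{0.84+1/99}$.

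Multiplying by $x^{1/7.39}$, the second sum is $\ll x^{0.84+1/99+1/7.39}$; a direct arithmetic check shows this exponent is below $0.986$, so the bound is $x^{1-\delta}$ for some fixed $\delta>0$, which for large $x$ is smaller than $x\exp(-c_6\log x/\log\log x)$ for any constant $c_6$. Combining the two estimates completes the proof. The most delicate point is the reduction to the quadratic resolvent: Proposition~\ref{prop:TT} is not directly applicable at level $z=(\log x)^{200}$, and without the detour through $L$ one could not exploit the very strong bound of Lemma~\ref{lem:few quadratic fields}.
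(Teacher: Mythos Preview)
Your proposal is correct and follows essentially the same route as the paper: Proposition~\ref{prop:TT} with $t'(p)\le 1/6$ for the first sum, and the passage to the quadratic resolvent together with Lemmas~\ref{lem:EV} and~\ref{lem:few quadratic fields} for the second. One small imprecision: the order of $\mathrm{Frob}_p$ in $S_3$ is not literally ``the residue degree of $p$ in $K$'' but rather the lcm of the residue degrees (equivalently, the cycle type on $S_3/H$), so when $p$ splits completely all cycle lengths are $1$ and hence $\mathrm{Frob}_p$ is trivial---your conclusion is right, just tighten the justification.
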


\begin{proof}
The number of cubic fields $K$ with $|D_K|\leq x$ in which every prime $p \le y$ splits completely can be estimated by Proposition~\ref{prop:TT}, where we take $t_p = \frac16$ for every $p\le y$. In particular, each such $t_p'<\frac16$, and so the first term can be bounded by
\begin{align}
z \sum_{\substack{|D_K| \le x \\ y<n_K}} 1 \ll z \bigg( \frac x{6^{\pi(y)}} + x^{5/6} \bigg) &\ll (\log x)^{200} \bigg( \frac x{\exp(\log 6 \cdot \frac12y/\log y)} + x^{5/6} \bigg) \notag \\
&\ll x \exp\bigg( {-}\frac{c_6\log{x}}{\log\log{x}}\bigg)  \label{eq:6 to the pi(y)}
\end{align}
for some positive constant ($c_6=0.02$ is valid); we used in the middle estimate a crude Chebyshev-type lower bound for $\pi(y)$.

On the other hand, if $n_K>z$, then either $K$ is already a Galois extension (in which case its quadratic resolvent equals $\Q$) or else every prime $p \le z$ splits completely in the normal closure of $K$, and so also splits in the quadratic resolvent $L$ of $K$ (see \cite[Corollary, p. 106]{marcus77}). Lemma~\ref{lem:few quadratic fields} tells us that the number of such fields $L$ is $\ll x^{1/99}$; for each such $L$, the number of cubic fields $K$ whose quadratic resolvent equals $L$  is $\ll x^{0.84}$ by Lemma \ref{lem:EV}. Therefore the second term can be bounded by
\[
x^{1/7.39} \sum_{\substack{|D_K| \le x \\ z<n_K}} 1 \ll x^{1/7.39} x^{1/99 + 0.84} \ll x^{0.99},
\]
which is small enough to establish the desired estimate.
\end{proof}

With Proposition~\ref{prop:TT} still available to handle the smaller values of $n_K$, we are now ready to evaluate the average value of $n_K$ over cubic fields~$K$.

\begin{proof}[Proof of Theorem~\ref{thm:cubic}]
We prove the theorem in the quantitative form
\[
\bigg( \sum_{|D_K| \leq x} 1 \bigg)^{-1} \bigg( \sum_{|D_K| \leq x} n_K \bigg) = \Delta_{nsc} + O\bigg(\exp\bigg( {-}\frac{c_6\log{x}}{\log\log{x}}\bigg) \bigg),
\]
%% deleted an extra x that had snuck into the error term
where the sums are taken over cubic fields~$K$. Taking $y=1$ in Proposition~\ref{prop:TT}, we deduce a quantitative version of the Davenport--Heilbronn Theorem (Proposition~\ref{prop:DH}), namely that the first sum is $x/3\zeta(3) + O(x^{5/6})$. Therefore it suffices to show that
\begin{equation}\label{eq:cubic denominator}
\sum_{|D_K| \leq x} n_K = \frac{x}{3\zeta(3)} \Delta_{nsc} + O\bigg( x \exp\bigg( {-}\frac{c_6\log{x}}{\log\log{x}}\bigg) \bigg).
\end{equation}

Set $y=\frac{1}{40}\log{x}$ and $z=(\log{x})^{200}$, and write
\begin{align}
\sum_{|D_K| \leq x} n_K &= \sum_{\substack{|D_K| \le x \\ n_K \le y}} n_K + \sum_{\substack{|D_K| \le x \\ y<n_K \le z}} n_K + \sum_{\substack{|D_K| \le x \\ z<n_K}} n_K \label{eq:three way prime split} \\
&= \sum_{\ell\le y} \ell \sum_{\substack{|D_K| \le x \\ n_K = \ell}} 1 + O\bigg( z \sum_{\substack{|D_K| \le x \\ y<n_K}} 1 + x^{1/7.39} \sum_{\substack{|D_K| \le x \\ z<n_K}} 1 \bigg); \notag
\end{align}
in the last sum we have used the universal upper bound for $n_K$ from Proposition~\ref{prop:li}. By Lemma~\ref{lem:error terms field count}, this estimate becomes
\begin{equation}\label{eq:nly main term left}
\sum_{|D_K| \leq x} n_K = \sum_{\ell\le y} \ell \sum_{\substack{|D_K| \le x \\ n_K = \ell}} 1 + O\bigg( x \exp\bigg( {-}\frac{c_6\log{x}}{\log\log{x}}\bigg) \bigg).
\end{equation}

For any prime $\ell$, the cubic fields $K$ such that $n_K=\ell$ are exactly those fields in which $p$ splits completely for all $p<\ell$, yet $\ell$ does not split completely. Proposition~\ref{prop:TT}, summed over the four other possibilities for the factorization of $\ell$, tells us that the number of such fields with $|D_K| \leq x$ is
%% added ``with $|D_K| \leq x$''
\[
\frac x{\zeta(3)} \frac{(5/6 + 1/\ell+1/\ell^2)}{1+1/\ell+1/\ell^2} \prod_{p < \ell}\frac{1/6}{1+1/p+1/p^2} + O(x^{5/6}),
\]
and therefore
\begin{align*}
\sum_{\ell\le y} \ell \sum_{\substack{|D_K| \le x \\ n_K = \ell}} 1 &= \frac{x}{3\zeta(3)} \sum_{\ell \leq y} \frac{\ell(5/6 + 1/\ell+1/\ell^2)}{1+1/\ell+1/\ell^2} \prod_{p < \ell}\frac{1/6}{1+1/p+1/p^2} + O(y^2 x^{5/6}) \\
%% changed y to y^2 in the error term. maybe I'm missing something, but I think we need this to deal with the sum over $\ell$? 
&= \frac{x}{3\zeta(3)} \Delta_{nsc} + O\bigg( x \sum_{\ell > y} \Big( \ell \prod_{p < \ell} \tfrac16 \Big) + x^{5/6} \log^2 x\bigg)
%% changed $\log{x}$ to $\log^2{x}$ in the error term
\end{align*}
by the definition~\eqref{eq:Delta_nsc def} of $\Delta_{nsc}$. In this remaining sum, each summand is at most $\frac13$ of the previous one by Bertrand's postulate, and so the sum is bounded (up to a multiplicative constant) by the first term; we conclude that
\begin{align*}
\sum_{\ell\le y} \ell \sum_{\substack{|D_K| \le x \\ n_K = \ell}} 1 &= \frac{x}{3\zeta(3)} \Delta_{nsc} + O( x \cdot 2y \cdot 6^{-\pi(y)} + x^{5/6} \log x ) \\
%% added a $\cdot$ between the $2y$ and the $6^{-\pi(y)}$
&= \frac{x}{3\zeta(3)} \Delta_{nsc} + O\bigg( x \exp\bigg( {-}\frac{c_6\log{x}}{\log\log{x}}\bigg) \bigg)
\end{align*}
just as in the estimate~\eqref{eq:6 to the pi(y)}. Inserting this result into equation~\eqref{eq:nly main term left} establishes equation~\eqref{eq:cubic denominator} and hence the theorem.
\end{proof}

For each natural number $k$ and each prime $p \equiv 1\pmod{k}$, let $r_k(p)$ denote the least prime $k$th power residue modulo $p$. Elliott \cite{elliott71} has shown  that for each of $k=2, 3$, and $4$, the function $r_k(p)$ possesses a finite mean value. When $k=3$, Elliott's result gives the average smallest split prime in cubic extensions of prime conductor.

Motivated by Elliott's work, one might wonder if it is possible to obtain the average least split prime, where the average is instead taken over {\em all} cubic extensions of $\Q$ (ordered by the absolute value of their discriminant as in Theorem~\ref{thm:cubic}). One could ask the same question for the least partially split prime or the least inert prime. We can establish the following conditional results, where the averages are taken over cubic fields~$K$:

\begin{thm}\label{thm:generalsplit} Assume the generalized Riemann hypothesis for Dedekind zeta functions. For each prime $p$, define
\[ t'_{\text{cs}}(p) = \frac{1/6}{1+1/p+1/p^2}, \quad t'_{\text{inert}}(p) = \frac{1/3}{1+1/p+1/p^2}, \quad\text{and } t'_{\text{ps}}(p) = \frac{1/2}{1+1/p+1/p^2}. \]
The average least completely split prime in a cubic field is
\[ \sum_{\ell} \ell t'_{\text{cs}}(\ell) \prod_{p < \ell}(1-t'_{\text{cs}}(p)) = 19.7952216366\ldots. \]
The average least inert prime is
\[ \sum_{\ell} \ell t'_{\text{inert}}(\ell) \prod_{p < \ell}(1-t'_{\text{inert}}(p)) = 8.5447294614\ldots. \]
Finally, if the average is restricted to non-Galois cubic fields, the average least partially split prime is
\[ \sum_{\ell} \ell t'_{\text{ps}}(\ell) \prod_{p < \ell}(1-t'_{\text{ps}}(p)) = 5.3680248421\ldots. \]
Here all sums and products are taken over primes $\ell$ and~$p$.
\end{thm}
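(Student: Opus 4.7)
The plan is to follow the proof of Theorem~\ref{thm:cubic} closely, making two substitutions: the unconditional bound of Proposition~\ref{prop:li} is replaced by an effective Chebotarev bound under GRH, and the probability weights in the local factors are adjusted to measure the first prime that \emph{does} split in manner $X$, rather than the first that does not split completely. For each $X \in \{\text{cs}, \text{inert}, \text{ps}\}$, let $n_K^X$ denote the least rational prime with the indicated splitting type in the cubic field $K$, and let $\Delta_X$ denote the sum claimed in the theorem.

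First I would establish a uniform pointwise bound for $n_K^X$ under GRH. If $L$ is the Galois closure of $K/\Q$, then $[L:\Q]\in\{3,6\}$ and $|D_L| \ll |D_K|^3$ by standard conductor-discriminant relations. Each splitting type under consideration corresponds to a nonempty conjugacy class in $\mathrm{Gal}(L/\Q)$---the identity (cs), the 3-cycles (inert), and (for non-Galois $K$) the transpositions (ps)---so under GRH for $\zeta_L(s)$, the effective Chebotarev density theorem of Lagarias--Odlyzko gives $n_K^X \ll (\log|D_L|)^2 \ll (\log x)^2$ uniformly in $K$ with $|D_K|\le x$.

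Next I would set $y = \frac{1}{40}\log x$ and decompose
\[
\sum_{|D_K|\le x} n_K^X \;=\; \sum_{\ell\le y} \ell\cdot\#\{|D_K|\le x : n_K^X = \ell\} \;+\; \sum_{\substack{|D_K|\le x \\ n_K^X>y}} n_K^X.
\]
For each $\ell\le y$, the condition $n_K^X=\ell$ is a joint local condition at all primes $p\le\ell$ (each of the $p<\ell$ has one of the four non-$X$ splitting types, and $\ell$ itself has splitting type $X$). Summing Proposition~\ref{prop:TT} over the $4^{\pi(\ell-1)}$ valid configurations, and using that $\sum_\sigma t'_\sigma(p)=1$ when the sum runs over all five splitting types, gives
\[
\#\{|D_K|\le x : n_K^X=\ell\} \;=\; \frac{x}{3\zeta(3)}\,t'_X(\ell)\prod_{p<\ell}\bigl(1-t'_X(p)\bigr) \;+\; O\bigl(4^{\pi(\ell-1)}x^{5/6}\bigr).
\]
After weighting by $\ell$ and summing over $\ell\le y$, the aggregate error is $O(x^{5/6+o(1)})$ because $4^{\pi(y)} \ll x^{o(1)}$. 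Extending the main-term sum to all primes $\ell$ introduces a tail $\sum_{\ell>y}\ell \prod_{p<\ell}(1-t'_X(p))$; each factor in the product is bounded by some constant $c_X<1$, so the tail decays like a geometric series, contributing $O(x\exp(-c\log x/\log\log x))$ exactly as in the closing estimate of Theorem~\ref{thm:cubic}.

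For the tail $\sum_{n_K^X>y}n_K^X$, the GRH bound allows me to replace each $n_K^X$ by $O(\log^2 x)$, reducing the problem to counting fields with no prime $p\le y$ of splitting type $X$. Another application of Proposition~\ref{prop:TT} (again summed over configurations) gives this count as $\frac{x}{3\zeta(3)}\prod_{p\le y}(1-t'_X(p))+O(4^{\pi(y)}x^{5/6})$, which is $O(x\exp(-c\log x/\log\log x))$. Combining everything and dividing by $\sum_{|D_K|\le x}1=\frac{x}{3\zeta(3)}+O(x^{5/6})$ yields the claimed limit $\Delta_X$. For the ps case, we restrict throughout to non-Galois $K$; the $\ll x^{1/2}$ cyclic cubic fields with $|D_K|\le x$ (which possess no partially split primes) contribute negligibly to both the count and the numerator. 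The main obstacle is the first step---assembling a uniform GRH-based upper bound $n_K^X \ll (\log x)^2$ that covers both Galois and non-Galois cubic fields and all three conjugacy classes of interest. Once this bound is available, the combinatorial bookkeeping is a direct adaptation of the proof of Theorem~\ref{thm:cubic}.
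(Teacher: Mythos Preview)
Your proposal is correct and follows essentially the same route the paper sketches: replace Proposition~\ref{prop:li} by the GRH-conditional Lagarias--Odlyzko bound $n_K^X \ll (\log|D_K|)^2$, which removes the need for the second cutoff at~$z$, and then run the local-density count via Proposition~\ref{prop:TT} just as in the proof of Theorem~\ref{thm:cubic}. Your explicit summation over the $4^{\pi(\ell-1)}$ non-$X$ configurations (with the resulting harmless $4^{\pi(y)}x^{5/6}=x^{5/6+o(1)}$ error) is the right way to handle the reversed combinatorics here, and your treatment of the partially-split case is consistent with the paper's restriction to non-Galois~$K$.
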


The proofs mimic that of Theorem \ref{thm:cubic}. However, the proofs (in particular the analogues of Lemma~\ref{lem:error terms field count}) are simpler in that the parameter $z$ is no longer required: all values of $n_K$ greater than $y$ in equation~\eqref{eq:three way prime split} can be treated together. The reason is that under the generalized Riemann hypothesis, the least unramified prime $p$ with a prescribed splitting type is $\ll \log^2{|D_K|}$ (see \cite{LO77}, \cite{LMO79}); this conditional universal upper bound for the analogues of $n_K$ is small enough to obviate the need for the second splitting at~$z$.

It would be interesting to find an unconditional proof for any of the assertions of Theorem~\ref{thm:generalsplit}. The obstacle at present is the lack of an unconditional universal upper bound for the analogues of $n_K$ that could take the place of Proposition~\ref{prop:li}. Currently the best known upper bound for the least completely split prime is $|D|^{c_7}$ for a 
certain unspecified (and potentially large) absolute constant $c_7$ 
%%% very large -> large
(see \cite[Theorem 1.1]{LMO79}), 
%% added reference to LMO79; as far as I'm aware from some MathSciNet searching, this is still the best, even for cubic fields. And no one has calculated a value of c_7. 
and similarly for the least inert prime or the least partially split prime.

We conclude with a remark that might be illuminating. Each of the constants in Theorem~\ref{thm:generalsplit} has the form $\sum_\ell \ell t(\ell) \prod_{p<\ell} \big(1-t(\ell)\big)$, where $t(\ell)$ is the ``probability'' that the desired property holds for the prime~$\ell$. This sum is exactly the form the expectation would have if each prime were replaced by a random event $X_\ell$ that took place with probability $t(\ell)$, as long as the events $X_\ell$ were independent. The constant $\Delta_{nsc}$ in Theorem~\ref{thm:cubic} also has this form, and in fact so do the constants on the right-hand sides of equations~\eqref{eq:erdos1} and even~\eqref{eq:erdos0} (where $t(p)=\frac12$ reflects the fact that each prime is expected to be a quadratic residue exactly half of the time). The calculations of these average-value results demonstrate that these properties of small primes are indeed asymptotically independent; in the aforementioned situations it is quite helpful that the sums converge so rapidly that only the small primes are relevant.

\section*{Acknowledgements}
The authors thank Carl Pomerance for helpful conversations and Mits Kobayashi for technical assistance with Karim Belabas's \texttt{cubics} package.

\bibliographystyle{amsalpha}
\bibliography{ALCNFVTE}

\providecommand{\bysame}{\leavevmode\hbox to3em{\hrulefill}\thinspace}
\providecommand{\MR}{\relax\ifhmode\unskip\space\fi MR }
% \MRhref is called by the amsart/book/proc definition of \MR.
\providecommand{\MRhref}[2]{%
  \href{http://www.ams.org/mathscinet-getitem?mr=#1}{#2}
}
\providecommand{\href}[2]{#2}
\begin{thebibliography}{LMO79}

\bibitem[Ank52]{ankeny52}
N.~C. Ankeny, \emph{The least quadratic non residue}, Ann. of Math. (2)
  \textbf{55} (1952), 65--72.

\bibitem[Bac90]{bach90}
E.~Bach, \emph{Explicit bounds for primality testing and related problems},
  Math. Comp. \textbf{55} (1990), no.~191, 355--380.

\bibitem[Bai06]{baier06}
S.~Baier, \emph{On the least {$n$} with {$\chi(n)\ne1$}}, Q. J. Math.
  \textbf{57} (2006), no.~3, 279--283.

\bibitem[Bel97]{belabas97}
K.~Belabas, \emph{A fast algorithm to compute cubic fields}, Math. Comp.
  \textbf{66} (1997), no.~219, 1213--1237, software package \texttt{cubics}
  available from
  \texttt{http://www.math.u-bordeaux1.fr/$\sim$belabas/research/}.

\bibitem[Bel04]{belabas04}
\bysame, \emph{On quadratic fields with large 3-rank}, Math. Comp. \textbf{73}
  (2004), no.~248, 2061--2074.

\bibitem[BH96]{BH96}
R.~C. Baker and G.~Harman, \emph{The {B}run-{T}itchmarsh theorem on average},
  Analytic number theory, {V}ol. 1 ({A}llerton {P}ark, {IL}, 1995), Progr.
  Math., vol. 138, Birkh\"auser Boston, Boston, MA, 1996, pp.~39--103.

\bibitem[BH98]{BH98}
\bysame, \emph{Shifted primes without large prime factors}, Acta Arith.
  \textbf{83} (1998), no.~4, 331--361.

\bibitem[BST10]{BST10}
M.~Bhargava, A.~Shankar, and J.~Tsimerman, \emph{On the {D}avenport-{H}eilbronn
  theorem and second order terms}, e-print at \texttt{arXiv:1005.0672
  [math.NT]}.

\bibitem[Bur97]{burthe97}
R.~J. Burthe, Jr., \emph{The average least witness is {$2$}}, Acta Arith.
  \textbf{80} (1997), no.~4, 327--341.

\bibitem[CM11]{CM11}
H.~Cohen and A.~Morra, \emph{Counting cubic extensions with given quadratic
  resolvent}, J. Algebra \textbf{325} (2011), 461--478.

\bibitem[Coh93]{cohen93}
H.~Cohen, \emph{A course in computational algebraic number theory}, Graduate
  Texts in Mathematics, vol. 138, Springer-Verlag, Berlin, 1993.

\bibitem[DH71]{DH71}
H.~Davenport and H.~Heilbronn, \emph{On the density of discriminants of cubic
  fields. {II}}, Proc. Roy. Soc. London Ser. A \textbf{322} (1971), no.~1551,
  405--420.

\bibitem[DK00]{DK00}
W.~Duke and E.~Kowalski, \emph{A problem of {L}innik for elliptic curves and
  mean-value estimates for automorphic representations}, Invent. Math.
  \textbf{139} (2000), no.~1, 1--39.

\bibitem[Ell68]{elliott68}
P.~D.~T.~A. Elliott, \emph{A problem of {E}rd{\H o}s concerning power residue
  sums}, Acta Arith. \textbf{13} (1967/1968), 131--149.

\bibitem[Ell71]{elliott71}
\bysame, \emph{The least prime $k$-th-power residue}, J. London Math. Soc. (2)
  \textbf{3} (1971), 205--210.

\bibitem[Erd61]{erdos61}
P.~Erd\H{o}s, \emph{Remarks on number theory. {I}}, Mat. Lapok \textbf{12}
  (1961), 10--17 (Hungarian).

\bibitem[EV07]{EV07}
J.~S. Ellenberg and A.~Venkatesh, \emph{Reflection principles and bounds for
  class group torsion}, Int. Math. Res. Not. IMRN (2007), no.~1, Art. ID
  rnm002, 18 pp.

\bibitem[Jut77]{jutila77}
M.~Jutila, \emph{On {L}innik's constant}, Math. Scand. \textbf{41} (1977),
  no.~1, 45--62.

\bibitem[KP05]{KP05}
P.~Kurlberg and C.~Pomerance, \emph{On the periods of the linear congruential
  and power generators}, Acta Arith. \textbf{119} (2005), no.~2, 149--169.

\bibitem[Li11]{li11}
X.~Li, \emph{The smallest prime that does not split completely in a number
  field}, Algebra and Number Theory (2011), to appear, e-print at
  \texttt{arXiv:1003.5718 [math.NT]}.

\bibitem[LMO79]{LMO79}
J.~C. Lagarias, H.~L. Montgomery, and A.~M. Odlyzko, \emph{A bound for the
  least prime ideal in the {C}hebotarev density theorem}, Invent. Math.
  \textbf{54} (1979), no.~3, 271--296.

\bibitem[LO77]{LO77}
J.~C. Lagarias and A.~M. Odlyzko, \emph{Effective versions of the {C}hebotarev
  density theorem}, Algebraic number fields: {$L$}-functions and {G}alois
  properties ({P}roc. {S}ympos., {U}niv. {D}urham, {D}urham, 1975), Academic
  Press, London, 1977, pp.~409--464.

\bibitem[Mar77]{marcus77}
D.~A. Marcus, \emph{Number fields}, Universitext, Springer-Verlag, New York,
  1977.

\bibitem[Mon94]{montgomery94}
H.~L. Montgomery, \emph{Ten lectures on the interface between analytic number
  theory and harmonic analysis}, CBMS Regional Conference Series in
  Mathematics, vol.~84, Published for the Conference Board of the Mathematical
  Sciences, Washington, DC, 1994.

\bibitem[MV07]{MV07}
H.~L. Montgomery and R.~C. Vaughan, \emph{Multiplicative number theory. {I}.
  {C}lassical theory}, Cambridge Studies in Advanced Mathematics, vol.~97,
  Cambridge University Press, Cambridge, 2007.

\bibitem[Nor98]{norton98}
K.~K. Norton, \emph{A character-sum estimate and applications}, Acta Arith.
  \textbf{85} (1998), no.~1, 51--78.

\bibitem[Pol11]{pollack11}
P.~Pollack, \emph{The average least quadratic nonresidue modulo $m$ and other
  variations on a theme of {E}rd{\H{o}}s}, submitted, available from
  \texttt{http://www.math.ubc.ca/$\sim$pollack/work.html}.

\bibitem[Rob01]{roberts01}
D.~P. Roberts, \emph{Density of cubic field discriminants}, Math. Comp.
  \textbf{70} (2001), no.~236, 1699--1705 (electronic).

\bibitem[Ser73]{serre73}
J.-P. Serre, \emph{A course in arithmetic}, Springer-Verlag, New York, 1973.

\bibitem[Ten95]{tenenbaum95}
G.~Tenenbaum, \emph{Introduction to analytic and probabilistic number theory},
  Cambridge Studies in Advanced Mathematics, vol.~46, Cambridge University
  Press, Cambridge, 1995.

\bibitem[TT11]{TT11}
T.~Taniguchi and F.~Thorne, \emph{Secondary terms in counting functions for
  cubic fields}, submitted, e-print at \texttt{arXiv:1102.2914 [math.NT]}.

\end{thebibliography}

\end{document}